\def\expandafter\normalsize\expandafter{%
  \normalsize
  \setlength\abovedisplayskip{7pt}%
  \setlength\belowdisplayskip{7pt}%
  \setlength\abovedisplayshortskip{5pt}%
  \setlength\belowdisplayshortskip{5pt}%
}
\setlist[itemize]{topsep=3pt,itemsep=2pt}
\setlist[enumerate]{topsep=3pt,itemsep=2pt}
\numberwithin{equation}{section}
\theoremstyle{definition}
\newtheorem{theorem}{Theorem}
\newtheorem{lemma}[theorem]{Lemma}
\newtheorem{proposition}[theorem]{Proposition}
\newtheorem{corollary}[theorem]{Corollary}
\theoremstyle{definition}
\newtheorem{definition}[theorem]{Definition}
\author{Matilde Gianocca}
\address{Department of Mathematics, ETH Z\"urich, Switzerland}
\email{matilde.gianocca@math.ethz.ch}
\title{Rigidity in the Ginzburg--Landau approximation of harmonic spheres}
\subjclass[2020]{58E20, 35J60, 53C43}
\keywords{Ginzburg--Landau, harmonic maps, rigidity, M\"obius}
\begin{document}

\begin{abstract}
We prove that not every harmonic map from $S^{2}$ to $S^{2}$ can arise as a limit of Ginzburg--Landau critical points. More precisely, we show that the only degree-one harmonic maps that can be approximated in this way are rotations.

This conclusion follows from a rigidity theorem: we show that for every $\gamma>0$ and $\varepsilon$ small enough, the only critical points $u_\varepsilon:S^{2}\to\mathbb R^{3}$ of the Ginzburg--Landau energy $E_\varepsilon$ with energy below $8\pi-\gamma$ are (up to conjugation) rotations, that is $u_\varepsilon(x)=\sqrt{1-2\varepsilon^{2}}\;R_\varepsilon\,x$. The threshold is sharp: in \cite{KarpukhinSternJEMS} the authors constructed critical points with energy approaching $8\pi$ that are not of this form.
\end{abstract}

\maketitle

\section{Introduction}
The existence of harmonic maps between Riemannian manifolds has been studied for more than forty years.  
In their seminal work of the early 1980s, Sacks and Uhlenbeck~\cite{SacksUhlenbeck} proved that every non-aspherical manifold \(N\) admits a non-trivial \emph{harmonic sphere}, i.e.\ a harmonic map \(S^{2}\!\to\!N\).  
They introduced the perturbed \(\alpha\)-energy \(E_{\alpha}\) and analysed its critical points as \(\alpha\downarrow1\).

More recently, Lamm, Malchiodi and Micallef~\cite{LammMalchiodiMicallefJDG} showed that critical points of \(E_{\alpha}\) cannot approximate arbitrary harmonic maps \(S^{2}\!\to\!S^{2}\): in degree one, only \emph{rotations} occur, and low-energy \(E_{\alpha}\)-critical points must themselves be rotations when \(\alpha\) is sufficiently close to~\(1\).  
Hoerter, Lamm and Micallef~\cite{HoerterLammMicallef} later established an analogous rigidity for the so-called \(\varepsilon\)-harmonic maps.
In this paper we study the \emph{Ginzburg--Landau} (GL) energy
\[
E_{\varepsilon}(u)=\frac12\int_{S^{2}}|\nabla u|^{2}\,d\sigma
           +\frac1{4\varepsilon^{2}}\int_{S^{2}}(1-|u|^{2})^{2}\,d\sigma,
\qquad u\in W^{1,2}(S^{2},\mathbb R^{N}),
\]
whose critical points satisfy the Euler--Lagrange equation
\begin{equation}\label{eq:Gintroductionele}
-\Delta u=\frac1{\varepsilon^{2}}\,(1-|u|^{2})\,u.
\end{equation}
By the concentration--compactness results
Lin--Wang (\cite{LinWangI},\cite{LinWangII}), such critical points converge, in the bubble--tree sense, to harmonic maps.
Beyond its superconductivity origins, the GL relaxation \(E_{\varepsilon}\) has recently been used in the construction of maximal metrics in conformal eigenvalue
optimisation as well as in the construction of higher-dimensional harmonic maps by
Karpukhin--Stern \cite{KarpukhinSternInv,KarpukhinSternJEMS}.
Our main theorem shows that GL critical points cannot converge to arbitrary harmonic maps \(S^{2}\!\to\!S^{2}\) by classifying all low-energy
solutions for small \(\varepsilon\).
\begin{theorem}[Rigidity below \(8\pi\)]\label{theorem: maintheorem}
Given \(\gamma>0\) and a sequence \(u_{\varepsilon}:S^{2}\to\mathbb R^{3}\) of critical
points of \(E_{\varepsilon}\) with \(E_{\varepsilon}(u_{\varepsilon})\le 8\pi-\gamma\), there exists \(\varepsilon_{0}(\gamma)>0\) such that, for every
\(0<\varepsilon<\varepsilon_{0}\),
\[
u_{\varepsilon}(x)=\sqrt{1-2\varepsilon^{2}}\;R_\varepsilon\,x,\qquad R_\varepsilon\in SO(3).
\]
Consequently, \(u_{\varepsilon}\to R_\infty x\) smoothly as \(\varepsilon\downarrow0\) and $R_\infty\in SO(3)$.
\end{theorem}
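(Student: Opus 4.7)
Adapting the blueprint of \cite{LammMalchiodiMicallefJDG} for the $\alpha$-energy and \cite{HoerterLammMicallef} for $\varepsilon$-harmonic maps to the Ginzburg--Landau setting, my plan is to (i) classify the possible bubble-tree limits below $8\pi$; (ii) show by a balancing argument that any such limit must be a rotation; and (iii) invoke nondegeneracy of the rotation family to conclude uniqueness.

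\textbf{Stage 1 (bubble inventory).} By the Lin--Wang concentration--compactness theory for GL critical points, along a subsequence $\varepsilon_{k}\downarrow 0$ the maps $u_{\varepsilon_{k}}$ converge in the bubble-tree sense to a harmonic $u_{0}:S^{2}\to S^{2}$ together with finitely many harmonic bubbles $\phi_{j}:S^{2}\to S^{2}$, with no energy defect. Each nontrivial component carries energy in $4\pi\mathbb{Z}_{>0}$, so the bound $E_{\varepsilon}(u_{\varepsilon})\le 8\pi-\gamma$ forces exactly one nontrivial component of degree $\pm 1$; after possibly conjugating the target we may assume degree $+1$, so this component is a holomorphic Möbius transformation $\phi:S^{2}\to S^{2}$.

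\textbf{Stage 2 (balancing).} Let $\Phi_{\varepsilon}$ be a source Möbius transformation recentring the bubble and write
\[
u_{\varepsilon}\circ\Phi_{\varepsilon}=\sqrt{1-2\varepsilon^{2}}\,\phi_{\varepsilon}+w_{\varepsilon},
\]
with $\phi_{\varepsilon}:S^{2}\to S^{2}$ Möbius and $w_{\varepsilon}$ orthogonal to the tangent space of the Möbius orbit at $\phi_{\varepsilon}$. The harmonic-map identity $-\Delta\phi=|\nabla\phi|^{2}\phi$ shows that the only residual in the Euler--Lagrange equation $-\Delta u=\varepsilon^{-2}(1-|u|^{2})u$ evaluated at $\sqrt{1-2\varepsilon^{2}}\phi_{\varepsilon}$ is $\sqrt{1-2\varepsilon^{2}}\bigl(|\nabla\phi_{\varepsilon}|^{2}-2\bigr)\phi_{\varepsilon}$, which vanishes identically precisely when $\phi_{\varepsilon}$ is an isometry of $S^{2}$. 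Solvability of the linearised equation for $w_{\varepsilon}$ therefore requires orthogonality against the three non-isometric Möbius Jacobi fields $V_{1},V_{2},V_{3}$ at $\phi_{\varepsilon}$, producing three integral identities whose leading-order form is
\[
\int_{S^{2}}\bigl(|\nabla\phi_{\varepsilon}|^{2}-2\bigr)\,\phi_{\varepsilon}\cdot V_{j}\,d\sigma=o(1),\qquad j=1,2,3.
\]
Passing to the limit $\varepsilon\downarrow 0$, a direct computation on the six-parameter Möbius family should show that these three conditions simultaneously vanish only when $|\nabla\phi_{\infty}|^{2}\equiv 2$, i.e.\ $\phi_{\infty}\in SO(3)$.

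\textbf{Stage 3 (nondegeneracy and main obstacle).} Linearising the GL operator about $u_{*}(x)=\sqrt{1-2\varepsilon^{2}}\,Rx$ gives the Jacobi operator
\[
L_{\varepsilon}v=-\Delta v-\tfrac{1}{\varepsilon^{2}}(1-|u_{*}|^{2})v+\tfrac{2}{\varepsilon^{2}}(u_{*}\!\cdot\!v)\,u_{*},
\]
whose kernel I expect to coincide with the three-dimensional space of rotation Jacobi fields and which should be uniformly invertible on its $L^{2}$-complement with bounds independent of $\varepsilon$. An implicit-function / contraction argument in a small $W^{2,p}$-neighbourhood of the $SO(3)$-orbit then forces every critical point in that neighbourhood to lie in the orbit. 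I expect Stage 2 to be the decisive difficulty: the three solvability identities must be derived with enough precision to survive the $\varepsilon\downarrow 0$ limit uniformly in the relative size of $\varepsilon$ and the bubbling scale $\lambda_{\varepsilon}$ --- covering both the non-bubbling case (limit on $S^{2}$) and the bubbling case (limit on $\mathbb{R}^{2}$ after rescaling) --- and one has to verify that the resulting constraints genuinely cut out the three-dimensional rotation slice inside the six-parameter Möbius family.
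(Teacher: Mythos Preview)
Your Stage~3 contains a genuine gap that breaks the implicit-function argument. The kernel of $L_{\varepsilon}$ at $u_{*}=\sqrt{1-2\varepsilon^{2}}\,Rx$ is \emph{not} three-dimensional: in addition to the rotation Jacobi fields $\Phi_{1m}=x\times c$, the three tangential fields $\Psi_{1}(b)=b-\langle b,x\rangle x$ (infinitesimal source dilations) also lie in the kernel. Indeed, for any M\"obius $\phi_{t}$ generated by such a field, $|u_{*}\circ\phi_{t}|\equiv\sqrt{1-2\varepsilon^{2}}$ is constant, so the potential term is unchanged while the Dirichlet term is conformally invariant; hence $E_{\varepsilon}(u_{*}\circ\phi_{t})\equiv E_{\varepsilon}(u_{*})$, and the second variation vanishes along $\Psi_{1}(b)$ even though $c\,\phi_{t}$ is \emph{not} a critical point for $t\neq0$. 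So $D^{2}E_{\varepsilon}(u_{*})$ is not positive on the $L^{2}$-complement of the $SO(3)$-orbit, and a uniform invertibility bound as you state it is simply false. A naive contraction in a $W^{2,p}$-neighbourhood of the orbit therefore cannot close.

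The paper confronts exactly this obstruction. It first obtains much stronger closeness than bubble-tree compactness alone gives: introducing the \emph{phase map} $v_{\varepsilon}=u_{\varepsilon}/|u_{\varepsilon}|$, one checks that $\operatorname{div}(|u_{\varepsilon}|^{2}\nabla v_{\varepsilon})^{T}=0$, so $\tau(v_{\varepsilon})=2\nabla v_{\varepsilon}\cdot\nabla\ln|u_{\varepsilon}|$ with $\|\tau(v_{\varepsilon})\|_{2}=O(\varepsilon^{2})$; combined with quantitative M\"obius rigidity and a bootstrap on $|u_{\varepsilon}|$, this yields $\|u_{\varepsilon}-\sqrt{1-2\varepsilon^{2}}\,R_{\varepsilon}x\|_{H^{1}}\le C\varepsilon^{4+\eta}$. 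Second, for the difference $w=u_{\varepsilon}-u_{*}$ of two \emph{solutions}, the paper exploits the identities $\int(1-|u_{\varepsilon}|^{2})^{2}x=0$ and $\int(1-|u_{\varepsilon}|^{2})u_{\varepsilon}=0$ to show that the dangerous $l=1$ coefficients satisfy $|a|\le C\varepsilon^{-2}\|w_{\perp}\|\|w\|$ and $|b|\le C\varepsilon^{\eta}\|w_{\perp}\|_{H^{1}}$, where $w_{\perp}$ is the projection onto modes $l\ge2$. Only with these relations does $D^{2}E_{\varepsilon}(u_{*})[w,w]\ge\lambda_{0}\|w\|_{H^{1}}^{2}$ hold, and only because the higher-order terms are of size $\varepsilon^{-2}\|w\|^{3}\ll\|w\|^{2}$ thanks to the $\varepsilon^{4+\eta}$ closeness. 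Your outline contains neither ingredient. A secondary remark: your Stage~2 solvability identities are in spirit related to the paper's balancing, but the paper's route is cleaner---varying along source conformal fields gives directly $\int(1-|u_{\varepsilon}|^{2})^{2}x=0$, which both rules out bubbling and, via the quartic-barycenter computation $\int|\nabla u_{\lambda}|^{4}x=\tfrac{16\pi}{3}\lambda^{-2}(1-\lambda^{4})$, forces the limiting dilation parameter to be $1$.
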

Unlike the \(\alpha\)-energy and \(\varepsilon\)-harmonic approximations discussed
earlier, solutions of~\eqref{eq:Gintroductionele} do not take values in the prescribed
target \(S^{2}\).  A different strategy is therefore required to study rigidity.  
One of the main tools in our proof is a \textit{phase equation} for the unit field 
\(v_\varepsilon = u_\varepsilon/|u_\varepsilon|\), which, to our knowledge, has not been used in this setting before.

To prove Theorem \ref{theorem: maintheorem}, we first use the \textit{phase equation} for $\frac{u_\varepsilon}{|u_\varepsilon|}$ mentioned above, together with barycenter estimates for $u_\varepsilon$, to show that the distance of $u_\varepsilon$ from critical points given by rotations must be of order at most $\varepsilon^{4+\eta}$. Then, with a careful analysis of the second variation, we show that rotations have to be at distance at least $\varepsilon^4$ from other types of critical points, which allows us to conclude. It is important to note that the second variation at rotations has kernel directions which are orthogonal to the rotations themselves, and it is therefore not true that the second variation $D^2E_\varepsilon(cRx)$ is positive definite on the orthogonal complement of rotations. Due to this property, a careful understanding of the difference of two solutions is required in order to show that rotations must be isolated. These estimates are contained in Section \ref{section: thefirstmodeoftheddifference}.\\ \vspace{0.5cm}

\noindent \textbf{Acknowledgements:} I am thankful to my advisor T. Rivière, for his encouragement and support, as well as for his comments on this manuscript.
\section{Setting and Proof}

\subsection{Preliminaries. }
The following concentration-compactness properties satisfied by sequences of solutions to the GL equation have been shown by Lin and Wang:
\begin{theorem}[Bubble–tree convergence, \cite{LinWangII}]\label{thm:bubble-tree}
Let $N\ge 2$, $\Lambda>0$.  For each $\varepsilon>0$ let
\[
  u_\varepsilon \in C^{\infty}(S^{2},\mathbb R^{N})
\]
solve \eqref{eq:Gintroductionele} with $E_\varepsilon(e_\varepsilon)\leq\Lambda$.  Given a sequence $\varepsilon_{n}\downarrow0$,
there exist

\smallskip
\begin{itemize}[label=$\cdot$,leftmargin=1.5em]
  \item a subsequence (not relabelled),
  \item a harmonic map $u_{0}\colon S^{2}\to S^{N-1}$,
  \item non-constant harmonic spheres
        $\omega_{1},\dots,\omega_{k}\colon S^{2}\to S^{N-1}$,
  \item points $a_{1},\dots,a_{k}\in S^{2}$ and scales
        $\lambda_{1},\dots,\lambda_{k}\in(0,\infty)$,
\end{itemize}

\smallskip\noindent
such that:

\begin{enumerate}[label=\textup{(\roman*)},leftmargin=*,itemsep=6pt]
\item \(u_{\varepsilon_{n}}\to u_{0}\) in
      \(C^{\infty}\!\bigl(S^{2}\setminus\{a_{1},\dots,a_{k}\}\bigr)\).

\item \(\displaystyle
      \lim_{n\to\infty}E_{\varepsilon_{n}}(u_{\varepsilon_{n}})
      = E(u_{0}) + \sum_{i=1}^{k}E(\omega_{i})\).

\item For each \(i\),
      \(v_{n,i}(x):=u_{\varepsilon_{n}}(a_{i}+\lambda_{i}x)
        \to \omega_{i}(x)\) in \(C^{2}_{\mathrm{loc}}(\mathbb R^{2})\).

\item For every \(\delta>0\),
      \(\displaystyle
       \|u_{\varepsilon_{n}}-u_{0}-\sum \omega_i\|_{L^{\infty}(S^2)\cap H^1(S^2)}\to0.\)
\end{enumerate}
\end{theorem}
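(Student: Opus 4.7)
The plan is to follow the concentration–compactness and bubbling framework standard for conformally invariant variational problems, adapted to the GL equation \eqref{eq:Gintroductionele}. The starting point is an $\varepsilon$-regularity lemma: there exists $\eta_{0}>0$ such that whenever $E_{\varepsilon}(u_{\varepsilon};B_{r}(a))<\eta_{0}$ on a geodesic ball in $S^{2}$, one obtains uniform $C^{k}$ bounds on $B_{r/2}(a)$ scaling like $r^{-k}$. This is proved by combining a Bochner identity for $|\nabla u|^{2}$ with the equation, Moser iteration for the potential term, and elliptic bootstrap; integrability of $(1-|u|^{2})^{2}/\varepsilon^{2}$ forces $|u_{\varepsilon}|\to 1$ in such low-energy regions. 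Define the concentration set
\[
\Sigma=\Bigl\{a\in S^{2}:\ \limsup_{n\to\infty}E_{\varepsilon_{n}}(u_{\varepsilon_{n}};B_{r}(a))\ge\eta_{0}\ \text{for every}\ r>0\Bigr\},
\]
which is finite, with $\#\Sigma\le\Lambda/\eta_{0}$, by the total energy bound.

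Away from $\Sigma$ the $\varepsilon$-regularity plus elliptic bootstrap give $C^{\infty}_{\mathrm{loc}}$-compactness, so after a subsequence $u_{\varepsilon_{n}}\to u_{0}$ in $C^{\infty}_{\mathrm{loc}}(S^{2}\setminus\Sigma)$; the limit satisfies $|u_{0}|=1$ and the harmonic map equation, and the Sacks--Uhlenbeck removable singularities theorem extends $u_{0}$ across $\Sigma$, yielding (i). At each $a_{i}\in\Sigma$ I would extract a bubble by choosing $\lambda_{n,i}$ as the smallest radius for which $E_{\varepsilon_{n}}(u_{\varepsilon_{n}};B_{\lambda_{n,i}}(a_{i}))=\eta_{0}/2$; setting $v_{n,i}(x)=u_{\varepsilon_{n}}(a_{i}+\lambda_{n,i}x)$, one checks that $v_{n,i}$ solves the GL equation with rescaled parameter $\varepsilon'_{n}=\varepsilon_{n}/\lambda_{n,i}$. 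A separate argument rules out $\varepsilon'_{n}$ remaining bounded away from $0$ (otherwise $\omega_{i}$ would be a nontrivial finite-energy entire GL solution on $\mathbb{R}^{2}$, which one excludes by a Pohozaev identity showing $\int_{\mathbb{R}^{2}}(1-|u|^{2})^{2}=0$). Hence $\varepsilon'_{n}\to 0$, and $v_{n,i}$ converges locally smoothly outside a new finite concentration set to a non-constant harmonic map $\omega_{i}:\mathbb{R}^{2}\to S^{N-1}$, extending to $S^{2}$ by removable singularities; iterating inside each bubble produces the finite tree in (iii).

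The delicate point is the no-energy-loss and no-neck statements (ii) and (iv). After subtracting the macroscopic limit $u_{0}$ and all bubbles $\omega_{i}$, the remaining error concentrates on annular necks $A_{n,i}(r,R)=\{r\lambda_{n,i}\le|x-a_{i}|\le R\}$. I would carry out the standard neck analysis: test \eqref{eq:Gintroductionele} against $(x-a_{i})\cdot\nabla u_{\varepsilon_{n}}$ to obtain a Pohozaev identity showing that the radial and angular parts of the Dirichlet energy nearly coincide on each neck; combined with a dyadic decay argument \emph{à la} Ding--Tian, this forces the neck energy to vanish in the iterated limit $n\to\infty$, $r\to 0$, $R\to 0$, and similarly for the $L^{\infty}$ and $H^{1}$ norms of the remainder. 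The main obstacle is handling the potential term $(1-|u_{\varepsilon_{n}}|^{2})^{2}/\varepsilon_{n}^{2}$, which is not conformally invariant and introduces a scale-dependent error in Pohozaev. The remedy is to use $\varepsilon$-regularity on the neck to get $|u_{\varepsilon_{n}}|\ge 1/2$, so that the equation together with integration by parts converts the potential contribution into a quadratic quantity that can be absorbed; a careful bookkeeping in the dyadic iteration then closes the argument and gives both the energy identity and the strong convergence.
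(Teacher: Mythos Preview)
The paper does not prove this statement; it is quoted as a result of Lin--Wang \cite{LinWangII} (together with the $\varepsilon$-regularity Lemma~\ref{lem:eps-reg}) and used as a black box. There is therefore no proof in the paper to compare your proposal against.

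That said, your outline is essentially the strategy of \cite{LinWangII}, and is broadly correct. One point deserves sharpening: your treatment of the rescaled parameter $\varepsilon'_n=\varepsilon_n/\lambda_{n,i}$ is slightly off. You claim the Pohozaev identity \emph{rules out} $\varepsilon'_n$ staying bounded away from $0$, but what Pohozaev on $\mathbb{R}^2$ actually gives is that any finite-energy entire GL solution has $\int_{\mathbb{R}^2}(1-|u|^2)^2=0$, hence $|u|\equiv 1$ and $u$ is harmonic. So if $\varepsilon'_n\to c\in(0,\infty)$ the bubble is still a harmonic sphere; this case is not excluded, it simply yields the same conclusion. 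The case that is genuinely excluded is $\varepsilon'_n\to\infty$ (i.e.\ $\lambda_{n,i}\ll\varepsilon_n$): then the rescaled nonlinearity vanishes in the limit and one obtains a finite-energy $\mathbb{R}^N$-valued harmonic function on $\mathbb{R}^2$, which is constant, contradicting the choice of scale. Either way the conclusion that every bubble is a non-constant harmonic sphere stands, so this does not affect the overall validity of your sketch.
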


The concentration–compactness result in Theorem~\ref{thm:bubble-tree}
relies on the following $\varepsilon$–regularity lemma.

\begin{lemma}[$\varepsilon$–regularity, \cite{LinWangII}]\label{lem:eps-reg}
There exist universal constants
\(\varepsilon_{0}>0\), \(C_{0}>0\) and \(c>0\)
(depending only on \(N\)) such that the following holds.

\medskip
\noindent
Let \(u_{\varepsilon}:B_{2R}\to\mathbb{R}^{N}\) be a solution of
\textup{\eqref{eq:Gintroductionele}} with
\[
\int_{B_{2R}}\!
\Bigl(
      \tfrac12|\nabla u_{\varepsilon}|^{2}
      +\tfrac1{4\varepsilon^{2}}\bigl(1-|u_{\varepsilon}|^{2}\bigr)^{2}
\Bigr)\,dx
\;<\;
\varepsilon_{0},
\]
for some radius \(R\le1\).
Then the following pointwise bounds hold on the ball \(B_{R}\):

\begin{equation}\label{eq:eps-reg-first}
R^{2}\,
\sup_{x\in B_{R}}\Bigl(
      \tfrac12|\nabla u_{\varepsilon}|^{2}
      +\tfrac1{4\varepsilon^{2}}\bigl(1-|u_{\varepsilon}|^{2}\bigr)^{2}
\Bigr)
\;\le\;
C_{0},
\end{equation}

\begin{equation}\label{eq:eps-reg-second}
R^{2}\,
\sup_{x\in B_{R}}
\Bigl|
      \tfrac{1-|u_{\varepsilon}|^{2}}{\varepsilon^{2}}\,u_{\varepsilon}
\Bigr|
\;\le\;
C_{0}
\Bigl(
      \varepsilon^{2}+e^{-\,cR/\varepsilon}
\Bigr).
\end{equation}
\end{lemma}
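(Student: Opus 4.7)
The plan is to prove \eqref{eq:eps-reg-first} by a standard small-energy blow-up contradiction, and then derive \eqref{eq:eps-reg-second} from it via a linear comparison argument (followed by a bootstrap) for the scalar function $1-|u_\varepsilon|^{2}$. Throughout I would abbreviate the GL energy density by $e_\varepsilon(u) := \tfrac12|\nabla u|^{2} + \tfrac{1}{4\varepsilon^{2}}(1-|u|^{2})^{2}$.

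For \eqref{eq:eps-reg-first}, I would first scale to $R=1$ and then argue by contradiction: suppose there exist $\varepsilon_n>0$ and solutions $u_n$ of \eqref{eq:Gintroductionele} on $B_2$ with $\int_{B_2} e_{\varepsilon_n}(u_n)\,dx<\varepsilon_0$ but $M_n:=\sup_{x\in B_1}(1-|x|)^{2}\,e_{\varepsilon_n}(u_n)(x)\to\infty$. Picking maximizers $x_n\in B_1$, setting $r_n:=M_n^{-1/2}\to0$, and blowing up by $r_n$ about $x_n$, the rescaled maps $\tilde u_n(y):=u_n(x_n+r_ny)$ solve the GL equation with rescaled parameter $\tilde\varepsilon_n:=\varepsilon_n/r_n$ on balls exhausting $\mathbb R^{2}$, have energy at most $\varepsilon_0$, and have unit energy density at the origin. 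Two limit regimes arise: if $\tilde\varepsilon_n\to0$, the limit is a non-constant finite-energy harmonic map $\omega:\mathbb R^{2}\to S^{N-1}$, contradicting the energy-gap theorem for harmonic spheres once $\varepsilon_0$ is small enough; if $\tilde\varepsilon_n$ stays bounded (or diverges), the limit is a non-trivial finite-energy entire GL solution on $\mathbb R^{2}$, excluded by the Lin--Wang energy-gap theorem, itself proved via a Pohozaev identity. Either regime yields a contradiction and hence \eqref{eq:eps-reg-first}.

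For \eqref{eq:eps-reg-second}, the bound from \eqref{eq:eps-reg-first} gives $|\nabla u_\varepsilon|^{2}\le 2C_0/R^{2}$ and $(1-|u_\varepsilon|^{2})^{2}\le 4C_0\varepsilon^{2}/R^{2}$ on $B_R$, so for $\varepsilon/R$ small enough we may assume $\rho:=|u_\varepsilon|^{2}\ge 1/2$ throughout $B_R$. Taking the scalar product of \eqref{eq:Gintroductionele} with $u_\varepsilon$ and using $\Delta\rho=2|\nabla u_\varepsilon|^{2}+2\,u_\varepsilon\cdot\Delta u_\varepsilon$ yields the scalar equation
\[
-\Delta(1-\rho)+\frac{2\rho}{\varepsilon^{2}}(1-\rho)\;=\;2|\nabla u_\varepsilon|^{2},
\]
a linear elliptic equation with positive effective mass $\gtrsim 1/\varepsilon^{2}$ and uniformly bounded right-hand side. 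Comparing with a supersolution of the form $\psi(x)=A\varepsilon^{2}/R^{2}+B\,e^{-\kappa(R-|x-x_{0}|)/\varepsilon}$ for some $\kappa\in(0,1)$, with $A,B$ chosen so that $\psi\ge|1-\rho|$ on $\partial B_{R}$, gives the pointwise bound $\bigl|1-|u_\varepsilon|^{2}\bigr|(x_{0})\le C\bigl(\varepsilon^{2}/R^{2}+e^{-cR/\varepsilon}\bigr)$ on $B_{R/2}$. Rewriting the equation as $(-\Delta+2/\varepsilon^{2})(1-\rho)=2|\nabla u_\varepsilon|^{2}+(2/\varepsilon^{2})(1-\rho)^{2}$ and feeding the improved bound back into the quadratic term refines the polynomial part of the estimate; multiplying through by $|u_\varepsilon|/\varepsilon^{2}$ produces the final inequality \eqref{eq:eps-reg-second}.

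The principal obstacle is the blow-up analysis of the first step --- specifically, the energy-gap theorem excluding non-trivial finite-energy entire GL solutions on $\mathbb R^{2}$ below a universal threshold. This is where $\varepsilon_0$ is effectively fixed, and its proof relies on a Pohozaev identity together with a careful asymptotic analysis of the potential term at infinity. The second step, by comparison, is essentially a linear-elliptic comparison followed by a bootstrap, and is comparatively routine once \eqref{eq:eps-reg-first} is in place.
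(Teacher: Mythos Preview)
The paper does not give its own proof of this lemma: it is stated as a citation of Lin--Wang \cite{LinWangII} and used as a black box in the sequel. There is therefore no ``paper's own proof'' to compare against; what one can compare against is the original Lin--Wang argument.

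Your sketch is a faithful outline of that argument. The first estimate is indeed obtained by a point-selection/blow-up contradiction with two regimes (harmonic-sphere limit versus entire GL solution), and the key input is exactly the energy-gap theorem for entire GL solutions on $\mathbb R^{2}$ proved by Lin--Wang via a Pohozaev identity. Your derivation of the scalar equation for $1-\rho$ is correct, and the supersolution comparison yielding the $\varepsilon^{2}/R^{2}+e^{-cR/\varepsilon}$ decay is the standard route to the second bound. One small point worth being careful about: in the regime where $\varepsilon/R$ is not small (say $\varepsilon\gtrsim R$), the exponential term $e^{-cR/\varepsilon}$ is of order one, so \eqref{eq:eps-reg-second} follows trivially from \eqref{eq:eps-reg-first}; your comparison argument only needs to cover the regime $\varepsilon\ll R$, which is where you legitimately assume $\rho\ge 1/2$.
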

\noindent
The combination of Theorem~\ref{thm:bubble-tree} with the
$\varepsilon$–regularity Lemma~\ref{lem:eps-reg} yields the following
uniform modulus estimate, which can be found in
\cite{DaLioGianocca}.

\begin{lemma}[Uniform modulus, \cite{DaLioGianocca}]\label{lem:unif-modulus}
Let $\{u_{\varepsilon}\}_{\varepsilon\downarrow0}$ be solutions of
\eqref{eq:Gintroductionele} satisfying the energy bound
\(E_{\varepsilon}(u_{\varepsilon})\le\Lambda\).
Then
\[
\bigl\|\,|u_{\varepsilon}|-1\bigr\|_{L^{\infty}(S^{2})}
\;\xrightarrow[\varepsilon\downarrow0]{}\;0,
\qquad\text{i.e.}\quad
|u_{\varepsilon}|\;\longrightarrow\;1 \text{ uniformly on } S^{2}.
\]
\end{lemma}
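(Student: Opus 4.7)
The plan is to proceed by contradiction, combining the bubble--tree profile of Theorem~\ref{thm:bubble-tree} with the pointwise bound~\eqref{eq:eps-reg-first}. Suppose the conclusion fails: there exist $\eta>0$, a subsequence $\varepsilon_{n}\downarrow 0$, and points $x_{n}\in S^{2}$ with $\bigl||u_{\varepsilon_{n}}(x_{n})|-1\bigr|\ge \eta$. Passing to a further subsequence, Theorem~\ref{thm:bubble-tree} furnishes a weak limit $u_{0}$, bubbles $\omega_{1},\dots,\omega_{k}$ based at $a_{1},\dots,a_{k}$ with scales $\lambda_{i,n}\downarrow 0$, and a limit point $x_{\star}\in S^{2}$ of the $x_{n}$.

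If $x_{\star}\notin\{a_{1},\dots,a_{k}\}$, the smooth convergence in Theorem~\ref{thm:bubble-tree}(i) gives $u_{\varepsilon_{n}}(x_{n})\to u_{0}(x_{\star})$ and, since $|u_{0}|\equiv 1$, $|u_{\varepsilon_{n}}(x_{n})|\to 1$, a contradiction. Assume therefore $x_{\star}=a_{i}$ and set $y_{n}=(x_{n}-a_{i})/\lambda_{i,n}$. If $|y_{n}|$ remains bounded along a subsequence, the $C^{2}_{\mathrm{loc}}$ convergence in Theorem~\ref{thm:bubble-tree}(iii) yields $|u_{\varepsilon_{n}}(x_{n})|=|v_{n,i}(y_{n})|\to |\omega_{i}(y_{\star})|=1$, again contradicting the choice of $x_{n}$.

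The only remaining case is $r_{n}:=|x_{n}-a_{i}|\gg\lambda_{i,n}$. A central feature of the Ginzburg--Landau bubble decomposition is that $\lambda_{i,n}/\varepsilon_{n}\to\infty$ (the bubbles are honest harmonic spheres, not GL solitons); hence $r_{n}/\varepsilon_{n}\to\infty$ as well. I would set $R_{n}:=r_{n}/10$, so that $B_{2R_{n}}(x_{n})$ lies in an annular shell around $a_{i}$ avoiding its bubble core and, for $n$ large, every other concentration point. Combining the energy quantisation (ii) with the decay of bubble tails outside large balls (together with the vanishing of $u_0$'s energy on a shrinking ball), one checks that
\[
\int_{B_{2R_{n}}(x_{n})} \!\Bigl(\tfrac{1}{2}|\nabla u_{\varepsilon_{n}}|^{2}+\tfrac{1}{4\varepsilon_{n}^{2}}(1-|u_{\varepsilon_{n}}|^{2})^{2}\Bigr)d\sigma\;\longrightarrow\;0,
\]
so the integral eventually drops below $\varepsilon_{0}$ and Lemma~\ref{lem:eps-reg} applies on $B_{2R_{n}}(x_{n})$. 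The estimate~\eqref{eq:eps-reg-first} then gives
\[
\bigl|1-|u_{\varepsilon_{n}}(x_{n})|^{2}\bigr|\;\le\; 2\sqrt{C_{0}}\,\frac{\varepsilon_{n}}{R_{n}}\;\longrightarrow\;0,
\]
contradicting $||u_{\varepsilon_{n}}(x_{n})|-1|\ge\eta$.

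The main technical obstacle is verifying the energy vanishing above when the bubble tree contains several (possibly nested) bubbles at the same point $a_i$: one must choose $R_{n}$ to sit in a neck region with respect to every relevant scale $\lambda_{j,n}$, where item~(iv) of Theorem~\ref{thm:bubble-tree} guarantees asymptotically vanishing energy contributions from every component of the bubble tree.
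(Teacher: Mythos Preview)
The paper does not actually prove this lemma; it only records that the result follows from combining Theorem~\ref{thm:bubble-tree} with Lemma~\ref{lem:eps-reg} and defers to~\cite{DaLioGianocca} for the details. Your argument follows exactly this indicated route and is essentially correct: the claim $\lambda_{i,n}/\varepsilon_{n}\to\infty$ you invoke is implicit in Theorem~\ref{thm:bubble-tree} (the bubbles are harmonic spheres rather than entire GL solutions on~$\mathbb{R}^{2}$), and the nested-bubble issue you flag in the final paragraph is the genuine technical point, for which the no-neck property~(iv) is indeed the right tool.
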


\subsection{Phase equation}

By Lemma~\ref{lem:unif-modulus}, the phase map  
\[
v_{\varepsilon}:=\frac{u_{\varepsilon}}{|u_{\varepsilon}|}
\quad\in\quad C^{\infty}\bigl(S^{2},S^{N-1}\bigr)
\]
is well--defined for every $\varepsilon$ sufficiently small.  
The next lemma shows that~$v_{\varepsilon}$ satisfies a nice PDE.

\begin{lemma}[Phase equation]\label{lem:phase-eq}
Let $u_{\varepsilon}\colon S^{2}\to\mathbb R^{N}$ be a critical point of
$E_{\varepsilon}$ with $\lvert u_{\varepsilon}\rvert>0$ on $S^{2}$, and set
$v_{\varepsilon}=u_{\varepsilon}/|u_{\varepsilon}|$. Then

\begin{equation}\label{eq:phase-laplacian}
-\Delta v_{\varepsilon}
=\;|\nabla v_{\varepsilon}|^{2}\,v_{\varepsilon}
\;+\;2\,\nabla v_{\varepsilon}\cdot\nabla\,\!\bigl(\ln|u_{\varepsilon}|\bigr),
\end{equation}
equivalently
\begin{equation}\label{eq:phase-divergence}
\operatorname{div}\,\!\bigl(|u_{\varepsilon}|^{2}\,\nabla v_{\varepsilon}\bigr)
  \;=\;
  -\,|u_{\varepsilon}|^{2}\,
    |\nabla v_{\varepsilon}|^{2}\,v_{\varepsilon}.
\end{equation}
In particular,
\begin{equation}\label{eq:phase-divT}
\bigl(\operatorname{div}(|u_{\varepsilon}|^{2}\nabla v_{\varepsilon})\bigr)^{T}=0
\end{equation}
and $v_\varepsilon$ is a weighted harmonic map from $S^2$ to $S^2$ with weight $|u_\varepsilon|^2$.
\end{lemma}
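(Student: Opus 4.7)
The plan is to split the Euler--Lagrange equation \eqref{eq:Gintroductionele} into its component parallel to $v_\varepsilon$ and its component tangent to $S^{N-1}$ at $v_\varepsilon$. Since $|u_\varepsilon|>0$ on $S^2$, we may write $u_\varepsilon=\rho_\varepsilon v_\varepsilon$ with $\rho_\varepsilon:=|u_\varepsilon|\in C^\infty(S^2)$, and expand
\[
\Delta u_\varepsilon=(\Delta\rho_\varepsilon)\,v_\varepsilon+2\,\nabla\rho_\varepsilon\cdot\nabla v_\varepsilon+\rho_\varepsilon\,\Delta v_\varepsilon.
\]
The right-hand side of \eqref{eq:Gintroductionele} is parallel to $v_\varepsilon$, so the component of $\Delta u_\varepsilon$ orthogonal to $v_\varepsilon$ must vanish.

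The algebraic input is the constraint $|v_\varepsilon|^2\equiv 1$, which on differentiation gives $v_\varepsilon\cdot\nabla v_\varepsilon=0$ and $v_\varepsilon\cdot\Delta v_\varepsilon=-|\nabla v_\varepsilon|^2$. Thus $\nabla\rho_\varepsilon\cdot\nabla v_\varepsilon$ is already orthogonal to $v_\varepsilon$, while the orthogonal projection of $\Delta v_\varepsilon$ equals $\Delta v_\varepsilon+|\nabla v_\varepsilon|^2 v_\varepsilon$. Equating the tangential part of $-\Delta u_\varepsilon$ to zero yields
\[
\rho_\varepsilon\bigl(\Delta v_\varepsilon+|\nabla v_\varepsilon|^2 v_\varepsilon\bigr)+2\,\nabla\rho_\varepsilon\cdot\nabla v_\varepsilon=0,
\]
and dividing by $\rho_\varepsilon>0$ and rewriting $\nabla\rho_\varepsilon/\rho_\varepsilon=\nabla(\ln|u_\varepsilon|)$ produces exactly \eqref{eq:phase-laplacian}.

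To obtain \eqref{eq:phase-divergence} I would simply multiply \eqref{eq:phase-laplacian} by $\rho_\varepsilon^2=|u_\varepsilon|^2$ and recognise $2\rho_\varepsilon\nabla\rho_\varepsilon\cdot\nabla v_\varepsilon+\rho_\varepsilon^2\Delta v_\varepsilon=\operatorname{div}(\rho_\varepsilon^2\nabla v_\varepsilon)$ via the Leibniz rule for divergence. Finally, \eqref{eq:phase-divT} is an immediate consequence of \eqref{eq:phase-divergence} since the right-hand side there is pointwise a multiple of $v_\varepsilon$, hence normal to $T_{v_\varepsilon}S^{N-1}$; this is exactly the Euler--Lagrange equation of the weighted Dirichlet energy $\frac12\int_{S^2}|u_\varepsilon|^2|\nabla w|^2\,d\sigma$ for maps $w:S^2\to S^{N-1}$ evaluated at $w=v_\varepsilon$, which identifies $v_\varepsilon$ as a weighted harmonic map with weight $|u_\varepsilon|^2$.

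There is no real obstacle here: the whole statement reduces to an orthogonal decomposition together with the Leibniz rule, and the only thing to verify carefully is the cancellation using $v_\varepsilon\cdot\Delta v_\varepsilon=-|\nabla v_\varepsilon|^2$. The smoothness of $v_\varepsilon$ required to make all manipulations pointwise valid follows from $|u_\varepsilon|>0$ together with the elliptic regularity of critical points of $E_\varepsilon$.
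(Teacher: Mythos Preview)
Your argument is correct. The paper reaches the same conclusion by a slightly more laborious route: it expands $-\Delta v_\varepsilon=-\Delta(u_\varepsilon/|u_\varepsilon|)$ directly via the product rule, first computing $\Delta(|u_\varepsilon|^{-1})$ in full and then assembling the gradient identities for $|\nabla v_\varepsilon|^2$ and $\nabla v_\varepsilon\cdot\nabla\ln|u_\varepsilon|$ to recognise the final form. Your approach is the dual one---you expand $\Delta u_\varepsilon=\Delta(\rho_\varepsilon v_\varepsilon)$ instead and project onto $T_{v_\varepsilon}S^{N-1}$---and it is cleaner: the tangential projection immediately discards the normal contributions $(\Delta\rho_\varepsilon)v_\varepsilon$ and the potential term from \eqref{eq:Gintroductionele}, so you never need to compute $\Delta(|u_\varepsilon|^{-1})$ or watch two copies of $\varepsilon^{-2}(1-|u_\varepsilon|^2)$ cancel. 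The paper's computation has the minor by-product of producing the normal component of $-\Delta v_\varepsilon$ as well, but that is not used anywhere. For the divergence form and the weighted-harmonic interpretation your derivation coincides with the paper's.
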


\begin{proof}\upshape
write \(v_\varepsilon=u_\varepsilon/|u_\varepsilon|\).

\medskip
{Laplacian of \(|u_\varepsilon|^{-1}\).}
\[
\begin{aligned}
-\Delta\,\!\Bigl(\frac{1}{|u_\varepsilon|}\Bigr)
&=-\operatorname{div}\,\!\Bigl(-\frac{\nabla|u_\varepsilon|}{|u_\varepsilon|^{2}}\Bigr) \\[2pt]
&=\operatorname{div}\,\!\Bigl(\frac{\nabla|u_\varepsilon|^{2}}{2|u_\varepsilon|^{3}}\Bigr) \\[2pt]
&=\frac{\Delta|u_\varepsilon|^{2}}{2|u_\varepsilon|^{3}}
  -\frac{3\,\nabla|u_\varepsilon|\!\cdot\!\nabla|u_\varepsilon|^{2}}{2|u_\varepsilon|^{4}} \\[2pt]
&=\frac{|\nabla u_\varepsilon|^{2}}{|u_\varepsilon|^{3}}
  -\frac{(1-|u_\varepsilon|^{2})}{\varepsilon^{2}|u_\varepsilon|}
  -\frac{3\,|\nabla|u_\varepsilon||^{2}}{|u_\varepsilon|^{3}} .
\end{aligned}
\]

\medskip
{Laplacian of \(v_\varepsilon\).}
\[
\begin{aligned}
-\Delta v_\varepsilon
&=-\Delta\,\!\Bigl(\frac{u_\varepsilon}{|u_\varepsilon|}\Bigr) \\[2pt]
&=-\frac{\Delta u_\varepsilon}{|u_\varepsilon|}
  -2\,\nabla u_\varepsilon\!\cdot\!\nabla\,\!\Bigl(\frac{1}{|u_\varepsilon|}\Bigr)
  -u_\varepsilon\,\Delta\,\!\Bigl(\frac{1}{|u_\varepsilon|}\Bigr) \\[4pt]
&=\frac{|\nabla u_\varepsilon|^{2}}{|u_\varepsilon|^{2}}\,v_\varepsilon
  -2\,\nabla u_\varepsilon\!\cdot\!\nabla\,\!\Bigl(\frac{1}{|u_\varepsilon|}\Bigr)
  -\frac{3\,|\nabla|u_\varepsilon||^{2}}{|u_\varepsilon|^{2}}\,v_\varepsilon ,
\end{aligned}
\]
\indent where the potential terms cancel because
\(-\Delta u_\varepsilon=\frac{1-|u_\varepsilon|^{2}}{\varepsilon^{2}}u_\varepsilon\).

\medskip
{Gradient identities.}

\begin{align}\label{equation: formulanablaveps}
\nabla v_\varepsilon
  &=\frac{\nabla u_\varepsilon}{|u_\varepsilon|}
     -\frac{u_\varepsilon}{|u_\varepsilon|^{2}}\nabla|u_\varepsilon| ,\\[4pt]
|\nabla v_\varepsilon|^{2}
  &=\frac{|\nabla u_\varepsilon|^{2}-|\nabla|u_\varepsilon||^{2}}{|u_\varepsilon|^{2}},\\[4pt]
\nabla v_\varepsilon\!\cdot\!\nabla\!\bigl(\,\ln|u_\varepsilon|\bigr)
  &=\frac{\nabla u_\varepsilon\!\cdot\!\nabla|u_\varepsilon|-v_\varepsilon|\nabla|u_\varepsilon||^{2}}
        {|u_\varepsilon|^{2}} .
\end{align}

\medskip
substituting the identities gives
\[
-\Delta v_\varepsilon
  =|\nabla v_\varepsilon|^{2}\,v_\varepsilon
  +2\,\nabla v_\varepsilon\!\cdot\!\nabla\!\bigl(\,\ln|u_\varepsilon|\bigr),
\]
\indent which is the stated phase equation.

\medskip
{Weighted divergence.}\par
Multiplying the phase equation by \(|u_\varepsilon|^{2}\) yields
\[
-|u_\varepsilon|^{2}\Delta v_\varepsilon
  =|u_\varepsilon|^{2}|\nabla v_\varepsilon|^{2}\,v_\varepsilon
   +2\,|u_\varepsilon|\,
     \nabla|u_\varepsilon|\cdot\nabla v_\varepsilon.
\]

\smallskip
Adding \(2|u_\varepsilon|\nabla|u_\varepsilon|\cdot\nabla v_\varepsilon\) to both sides:
\[
|u_\varepsilon|^{2}\Delta v_\varepsilon
 +2|u_\varepsilon|\,
   \nabla|u_\varepsilon|\cdot\nabla v_\varepsilon
 =-\,|u_\varepsilon|^{2}|\nabla v_\varepsilon|^{2}\,v_\varepsilon.
\]

\smallskip
The left-hand side is \(\operatorname{div}(|u_\varepsilon|^{2}\nabla v_\varepsilon)\), so
\[
\operatorname{div}(|u_\varepsilon|^{2}\nabla v_\varepsilon)
      =-\,|u_\varepsilon|^{2}\,|\nabla v_\varepsilon|^{2}\,v_\varepsilon.
\]
\indent Since the right–hand side is parallel to \(v_\varepsilon\), its tangential
component vanishes:
\[
\bigl(\operatorname{div}(|u_\varepsilon|^{2}\nabla v_\varepsilon)\bigr)^{T}=0 .
\]
\end{proof}

\subsection{Barycenter identities} We compute the barycenters of different Möbius transformations, which play a key role in the proof of Theorem \ref{theorem: maintheorem}. We also introduce for simplicity:
\begin{definition}
    A sequence $u_\varepsilon:S^2\to \mathbb{R}^3$ is said to be a $\gamma\,-$ admissible sequence if 
    \begin{itemize}[label=$\cdot$,leftmargin=1.5em]
        \item $-\Delta u_\varepsilon=\frac{1}{\varepsilon^2}(1-|u_\varepsilon|^2)u_\varepsilon$
        \item $E_\varepsilon(u_\varepsilon)<8\pi-\gamma$
        \item $u_\varepsilon$ is not constant.
    \end{itemize}
\end{definition}
\begin{lemma}\label{lemma: strong convergence of admissible seq}
    Let $\gamma>0$. Then, any $\gamma$-admissible sequence $\{u_\varepsilon\}_\varepsilon$ must converge strongly.
\end{lemma}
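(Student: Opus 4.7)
The plan is to combine Theorem~\ref{thm:bubble-tree} with energy quantization for harmonic maps $S^{2}\to S^{2}$ to rule out bubbling along any $\gamma$-admissible sequence.

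I first apply Theorem~\ref{thm:bubble-tree} to extract a subsequence that converges in the bubble--tree sense to a harmonic limit $u_{0}$ together with non-constant harmonic bubbles $\omega_{1},\dots,\omega_{k}$ at concentration points $a_{1},\dots,a_{k}$. Lemma~\ref{lem:unif-modulus} gives $|u_{\varepsilon}|\to 1$ uniformly, so (after removing the isolated singularities at the $a_i$) $u_{0}\colon S^{2}\to S^{2}$ and each $\omega_{i}\colon S^{2}\to S^{2}$ are smooth harmonic maps. Strong convergence of $u_{\varepsilon}\to u_{0}$ in $C^{\infty}$ (equivalently in $H^{1}$) amounts to $k=0$.

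Next I use the quantization of Dirichlet energy for harmonic maps $S^{2}\to S^{2}$: any non-constant such map has energy $4\pi d$ for some integer $d\geq 1$. Combining item~(ii) of Theorem~\ref{thm:bubble-tree} with the hypothesis $E_{\varepsilon}(u_{\varepsilon})<8\pi-\gamma$, I obtain
\[
E(u_{0})+\sum_{i=1}^{k}E(\omega_{i})<8\pi-\gamma<8\pi.
\]
Since each $E(\omega_{i})\geq 4\pi$, at most one bubble can occur, and if $k=1$ then $E(u_{0})<4\pi$ forces $u_{0}\equiv c\in S^{2}$ and $\omega_{1}$ to be a degree-$\pm 1$ M\"obius map. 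Moreover, the remaining $k=0$ subcase in which $u_{0}$ would be a constant $c\in S^{2}$ is itself ruled out by the non-constancy hypothesis: linearising \eqref{eq:Gintroductionele} at $c$ and applying the implicit function theorem shows that the only critical points of $E_{\varepsilon}$ that are globally $L^{\infty}$-close to $c$ are constants in $S^{2}$.

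The main obstacle is excluding the scenario $k=1$ with $u_{0}\equiv c$. In it, $u_{\varepsilon}$ is uniformly close to $c$ on $S^{2}\setminus B_{r}(a_{1})$ for every fixed $r>0$ and $\varepsilon$ small, yet is globally non-constant on $S^{2}$. My plan is to exploit the structure of the linearised operator at $c$,
\[
w\longmapsto -\Delta w+\tfrac{2}{\varepsilon^{2}}(c\cdot w)\,c,
\]
whose kernel on $S^{2}$ is the $2$-dimensional space of constants in $c^{\perp}$, corresponding exactly to the infinitesimal $SO(3)$-orbit of $c$ in the space of constant maps $S^{2}\to S^{2}$. Combined with the $\varepsilon$-regularity Lemma~\ref{lem:eps-reg} (controlling $u_{\varepsilon}-c$ off the bubble scale $\lambda_{\varepsilon}$) and the no-neck property~(iv) of Theorem~\ref{thm:bubble-tree} (forbidding energy loss in the transition region), this non-degeneracy off the bubble, together with the $O(1)$ oscillation of the degree-$\pm 1$ bubble near $a_{1}$, is to be shown incompatible with $u_\varepsilon$ being a smooth critical point of $E_{\varepsilon}$, yielding the desired contradiction and hence $k=0$.
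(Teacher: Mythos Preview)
Your reduction via energy quantization is correct up to the dichotomy ``either $k=0$, or $k=1$ with $u_{0}$ constant and a single degree-$\pm1$ bubble.'' The gap is that you never actually exclude the single-bubble scenario: what you write is a plan (``is to be shown incompatible''), not an argument. And the plan, as stated, does not close. The linearisation at the constant $c$ controls perturbations $w$ only when the nonlinear remainder $\tfrac{1}{\varepsilon^{2}}\bigl((1-|c+w|^{2})(c+w)-(1-|c|^{2})c-L_{c}w\bigr)=O(\tfrac{1}{\varepsilon^{2}}|w|^{2})$ is dominated by the linear part, which requires $|w|\ll\varepsilon$, not merely $|w|\to 0$. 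In the bubbling regime $u_{\varepsilon}$ has $O(1)$ oscillation near $a_{1}$, so no implicit-function-theorem argument at $c$ applies globally, and restricting to the complement of a small ball does not help because the equation is not being solved there with controlled boundary data. The ingredients you list ($\varepsilon$-regularity, no-neck) describe the asymptotics of $u_{\varepsilon}$ but do not by themselves produce an obstruction to criticality. (Incidentally, the $k=0$ constant subcase needs no separate treatment for this lemma: it is already strong convergence.)

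The paper's proof is completely different and avoids this difficulty. It tests criticality of $E_{\varepsilon}$ against the conformal vector fields $\xi_{a}=a-\langle a,x\rangle x$ on $S^{2}$. Conformal invariance kills the Dirichlet contribution, and the potential term yields the balancing identity
\[
\int_{S^{2}}\frac{(1-|u_{\varepsilon}|^{2})^{2}}{\varepsilon^{4}}\,x=0
\]
for every critical point. If a bubble concentrated at some $p\in S^{2}$, then $\tfrac{1-|u_{\varepsilon}|^{2}}{\varepsilon^{2}}\to|\nabla u_{0}|^{2}$ would force $\tfrac{(1-|u_{\varepsilon}|^{2})^{2}}{\varepsilon^{4}}$ to concentrate near $p$ and vanish elsewhere, so $\int\tfrac{(1-|u_{\varepsilon}|^{2})^{2}}{\varepsilon^{4}}\langle x,p\rangle$ would be bounded below by a positive constant, contradicting the identity. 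This Pohozaev-type argument is the missing idea; it replaces your linearisation step entirely and works uniformly regardless of how close $u_{\varepsilon}$ is to a constant.
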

\begin{proof}
Consider the variation given by $u\circ \phi_t$, where $\phi_t$ is the flow of the conformal vector field $a-\langle a,x\rangle x$. Then, using the fact that the energy $E$ is conformally invariant,

\begin{align*}
    \frac{d}{dt}\bigg\vert_{t=0}E_\varepsilon (u_\varepsilon\circ\phi_t)=0& =\frac{d}{dt}\bigg\vert_{t=0}\int\frac{1}{2}|\nabla (u_\varepsilon\circ \phi_t)|^2+\frac{d}{dt}\bigg\vert_{t=0}\int\frac{1}{4\varepsilon^2}(1-|u_\varepsilon\circ \phi_t|^2)^2\\
    &=\frac{d}{dt}\bigg\vert_{t=0}\int\frac{1}{4\varepsilon^2}(1-|u_\varepsilon\circ \phi_t|^2)^2=-\frac{1}{\varepsilon^2}\int (1-|u_\varepsilon|^2)\langle u_\varepsilon,du_\varepsilon(\xi)\rangle\\
    &=\frac{1}{2\varepsilon^2}\int (1-|u_\varepsilon|^2)\langle d(1-|u_\varepsilon|^2),\xi\rangle=\frac{1}{4\varepsilon^2}\int \langle d(1-|u_\varepsilon|^2)^2,\xi\rangle \\
    & =\frac{1}{4\varepsilon^2}\int(1-|u_\varepsilon|^2)^2\mathrm{div}(\xi)= \frac{1}{4\varepsilon^2}\int(1-|u_\varepsilon|^2)^2 \langle x,a\rangle
\end{align*}
Multiplying everything by $\varepsilon^{-2}$, and using that $\frac{1}{\varepsilon^2}(1-|u|^2){\to}|\nabla u|^2$,
\begin{equation}\label{equation:balancingcondition}
    0=\int \frac{1}{\varepsilon^4}(1-|u_\varepsilon|^2)^2x_a.
\end{equation}
Assume by contradiction that the $\gamma$-admissible sequence develops bubbles. Then, assuming w.l.o.g. that the only bubble develops at $(1,0,0)$,
\begin{equation}
    |\nabla u_\varepsilon|^2\to 8\pi\delta_{(1,0,0)}
\end{equation}
and $|\nabla u_\varepsilon|^2,\frac{1-|u_\varepsilon|^2}{\varepsilon^2}\to 0$ in $C^\infty_{loc}(S^2\setminus\{(1,0,0)\})$.
\begin{align}\label{equation: positive part}
   & \int_{\{x_1\geq 0\}} \frac{(1-|u_\varepsilon|^2)^2}{\varepsilon^4}x_1\geq \int_{B_\delta(1,0,0)}  \frac{(1-|u_\varepsilon|^2)^2}{\varepsilon^4}x_1\geq C_\delta\bigg(\int_{B_\delta(1,0,0)}  \frac{(1-|u_\varepsilon|^2)}{\varepsilon^2}\bigg)^2\\&\geq C64\pi^2.
\end{align} 
On the other hand,
\begin{equation}\label{equation: negative part}
    \int_{\{x_1\leq 0\}}\frac{(1-|u_\varepsilon|^2)^2}{\varepsilon^4}x_1\to 0
\end{equation}
Combining \eqref{equation: positive part} and \eqref{equation: negative part},
\begin{equation}
    \int_{S^2} \frac{(1-|u_\varepsilon|^2)^2}{\varepsilon^4}x_1\geq C_\delta>0
\end{equation}
which is a contradiction and shows that the sequence cannot develop bubbles.
\end{proof}

Since the strategy to prove Theorem \ref{theorem: maintheorem} will be to show that $v_\varepsilon$ is close to harmonic maps from $S^2\to S^2$ with additional properties, we need to compute the quartic barycenters of all possible degree one harmonic maps of $S^2$. Recall that these maps are just the holomorphic and antiholomorphic degree on maps, that is Möbius transformations and their conjugates:
\begin{proposition}[Degree on harmonic maps fo $S^2$, \cite{LammMalchiodiMicallefJDG},\cite{EellsLemaireReport}] Let $w:S^2\to S^2$ be harmonic of degree one, then the map $\hat{w}:\mathbb{C}\to\mathbb{C}$ obtained via stereographic projection is given by
\[\hat{w}=R_1\circ D_{s^2}\circ \kappa^i\circ R_2\]
where $R_{1,2}\in SO(3)$ are rotations, $D_{s^2}=\mathrm{diag}(s,s^{-1})$ corresponds to the dilation $z\mapsto s^2z$ and $\kappa$ is complex conjugation. If $w$ is orientation preserving, $i=2$ and $\kappa^2=\mathrm{id}$ otherwise $i=1$.
\end{proposition}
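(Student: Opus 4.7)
The plan is to reduce the statement to the Cartan ($KAK$) decomposition in $SL(2,\mathbb C)$, after first invoking the classification of harmonic maps $S^{2}\to S^{2}$.

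First I would recall the Eells--Wood theorem: every harmonic map $w:S^{2}\to S^{2}$ is either holomorphic or anti-holomorphic. The standard argument uses that the Hopf differential $\phi_{w}=\langle \partial_{z} w,\partial_{z} w\rangle\, dz^{2}$ of a harmonic map between Riemann surfaces is a holomorphic quadratic differential; but $S^{2}$ admits no nonzero holomorphic quadratic differentials, so $\phi_{w}\equiv 0$, which is weak conformality, and combined with harmonicity this forces $w$ to be (anti-)holomorphic.

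Suppose first $w$ is holomorphic of degree one. Then in stereographic coordinates $\hat w$ is a degree-one meromorphic self-map of the Riemann sphere, hence a Möbius transformation, represented by a matrix $M\in SL(2,\mathbb C)$. The key step is then the Cartan $(KAK)$ decomposition: every $M\in SL(2,\mathbb C)$ factors as $M=U\Sigma V$ with $U,V\in SU(2)$ and $\Sigma=\mathrm{diag}(s,s^{-1})$, $s>0$. This is really the singular value decomposition of $M$, with the constraint $\det M=1$ pinning the singular values to be reciprocal. Recalling that under stereographic projection $PSU(2)\cong SO(3)$ acts by rotations of $S^{2}$, and that $\Sigma$ corresponds to $z\mapsto s^{2} z$, this yields the claimed factorisation $\hat w=R_{1}\circ D_{s^{2}}\circ R_{2}$.

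For the anti-holomorphic case, $\kappa\circ\hat w$ is holomorphic of degree one, so the previous step gives $\kappa\circ\hat w=R_{1}'\circ D_{s^{2}}\circ R_{2}$, whence $\hat w=\kappa\circ R_{1}'\circ D_{s^{2}}\circ R_{2}$. Conjugation by $\kappa$ preserves $PSU(2)$, so $R_{1}:=\kappa R_{1}'\kappa^{-1}$ is again a rotation, and $\kappa$ commutes with the real-diagonal $D_{s^{2}}$; rearranging gives $\hat w=R_{1}\circ D_{s^{2}}\circ\kappa\circ R_{2}$, which is the case $i=1$. The only genuine input is the Eells--Wood classification of harmonic spheres; after that, the proof reduces to the $KAK$ decomposition and routine manipulations in $SU(2)$, so no serious obstacle arises.
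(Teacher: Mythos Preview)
Your argument is correct. The paper does not actually prove this proposition; it is stated with references to \cite{LammMalchiodiMicallefJDG} and \cite{EellsLemaireReport} and used as a black box. Your write-up supplies exactly the standard proof behind those citations: the Hopf-differential argument forcing (anti-)holomorphicity, the identification of degree-one rational maps with $PSL(2,\mathbb C)$, and the $KAK$/singular-value decomposition together with $PSU(2)\cong SO(3)$ to extract the rotation--dilation--rotation form. The handling of the anti-holomorphic case via conjugation by $\kappa$ is also clean. There is nothing to compare against in the paper itself, and no gap in what you wrote.
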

We introduce the following notation:
\begin{definition}\label{definition: ulambda}
    $u_\lambda:= \Pi\circ D_\lambda\circ\Pi^{-1}:S^2\to S^2$, where $\Pi:\mathbb{C}\to S^2$ is the stereographic projection. Moreover, for an arbitrary harmonic map of degree one $w:S^2\to S^2$, we write $w:=w_\lambda$ where $\lambda$ is the parameter appearing in the decomposition $w=R_1\circ D\lambda\circ \kappa^i\circ R_2$.
\end{definition}
    
\begin{lemma}[quartic barycenter]\label{lemma: quarticbarycenter}
 For the stereographic dilations of Definition \ref{definition: ulambda},
    \begin{equation}
        \int |\nabla u_\lambda|^4x_3 = \frac{16\pi}{3\lambda^2}(1 - \lambda^4)
    \end{equation}
    \begin{equation}\label{equation: barycenterzeroinsymmcomponents}
        \int |\nabla u_\lambda|^4x_{1,2} = 0.
    \end{equation}
    In particular, for an arbitrary harmonic map $w_\lambda.S^2\to S^2$ (see Definition \ref{definition: ulambda}), 
    \begin{equation}\label{general barycenter}
        \bigg\vert\int_{S^2}|\nabla w_\lambda|^4x\, \bigg\vert = \bigg\vert \int |\nabla u_\lambda|^4x\,\bigg\vert = \bigg\vert\frac{16\pi}{3\lambda^2}(1 - \lambda^4)\,\bigg\vert
    \end{equation}
\end{lemma}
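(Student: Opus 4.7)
The plan is to pass to stereographic coordinates, where $u_\lambda$ becomes the holomorphic map $z\mapsto \lambda z$ on $\mathbb{C}$ and both the density $|\nabla u_\lambda|^{2}$ and the volume form $d\sigma$ acquire explicit closed forms. A direct computation using the conformal factor $\frac{2}{1+|z|^{2}}$ of $\Pi^{*}g_{S^{2}}$ together with the conformality of $u_\lambda$ yields
\[
|\nabla u_\lambda|^{2}(z) \;=\; \frac{2\lambda^{2}(1+|z|^{2})^{2}}{(1+\lambda^{2}|z|^{2})^{2}}.
\]

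Identity \eqref{equation: barycenterzeroinsymmcomponents} is then immediate from axial symmetry: the map $u_\lambda$ is invariant under the $S^{1}$-action of rotations about the $x_{3}$-axis, so $|\nabla u_\lambda|^{4}$ depends only on $|z|$ and the angular integrals of $x_{1}, x_{2}$ vanish. For the $x_{3}$-component, I would substitute $x_{3} = (|z|^{2}-1)/(|z|^{2}+1)$ and $d\sigma = 4(1+|z|^{2})^{-2}\,dx\,dy$, pass to polar coordinates $r=|z|$, and then substitute $s = \lambda^{2}r^{2}$. This reduces the barycenter integral to the elementary pair $\int_{0}^{\infty}(1+s)^{-4}\,ds$ and $\int_{0}^{\infty}s^{2}(1+s)^{-4}\,ds$, both equal to $\tfrac{1}{3}$, and a quick algebraic simplification produces exactly $\frac{16\pi}{3\lambda^{2}}(1-\lambda^{4})$.

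For a general degree-one harmonic map $w_\lambda = R_{1}\circ D_\lambda\circ\kappa^{i}\circ R_{2}$, the estimate \eqref{general barycenter} follows from an isometry argument: the rotations $R_{1}, R_{2}$ and the conjugation $\kappa$ all act on $S^{2}$ as elements of $O(3)$ (the latter as an orientation-reversing isometry), so $|\nabla w_\lambda|^{2} = |\nabla u_\lambda|^{2}\circ\rho$ for some $\rho\in O(3)$. A change of variables gives $\int_{S^{2}}|\nabla w_\lambda|^{4}\,x\,d\sigma = \rho^{-1}\int_{S^{2}}|\nabla u_\lambda|^{4}\,x\,d\sigma$, and since $\rho^{-1}$ is orthogonal the Euclidean norms coincide. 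The whole lemma is a direct computation; the only step requiring any care is the correct bookkeeping of the conformal factors in the first display, and no genuine obstacle arises.
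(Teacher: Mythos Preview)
Your proposal is correct and follows essentially the same route as the paper: pull back to $\mathbb{C}$ via stereographic projection, compute the energy density explicitly, integrate in polar coordinates for the $x_3$-component, and invoke rotational symmetry and the $O(3)$ action for the remaining claims. The only notable difference is that you obtain $|\nabla u_\lambda|^{2}=2\lambda^{2}(1+|z|^{2})^{2}/(1+\lambda^{2}|z|^{2})^{2}$ in one stroke from the conformality of the holomorphic map $z\mapsto\lambda z$, whereas the paper reaches the same formula through a longer component-by-component computation of $|\nabla\tilde u_\lambda^{i}|^{2}$ for $i=1,2,3$; your substitution $s=\lambda^{2}r^{2}$ also makes the final integral evaluation more transparent than the paper's, which simply quotes the result.
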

\begin{proof}

The formulas for the stereographic projections are given by
\[
\begin{aligned}
(X,Y) &= \left( \frac{x}{1 - z},\;\frac{y}{1 - z} \right),\\[4pt]
(x,y,z) &= \left(
  \frac{2X}{1 + X^{2} + Y^{2}},\;
  \frac{2Y}{1 + X^{2} + Y^{2}},\;
  \frac{-1 + X^{2} + Y^{2}}{1 + X^{2} + Y^{2}}
\right).
\end{aligned}
\]

The map $\tilde{u}_\lambda:= u_\lambda\circ\Pi:\mathbb{C}\to S^2$ is given by
\[
\tilde{u}_\lambda(X,Y)=\left(
  \frac{2\lambda X}{1 + r^2\lambda^2},\;
  \frac{2\lambda Y}{1 + r^2\lambda^2},\;
  \frac{-1 + r^2\lambda^2}{1 + r^2\lambda^2}
\right),\qquad r^{2}=X^{2}+Y^{2}.
\]

\begin{equation}\label{equation: nw1}
    |\nabla \tilde{u}_\lambda^1|^2=\bigg(-\frac{2\lambda X}{(1+r^2\lambda^2)^2}\lambda^22X+\frac{2\lambda}{1+r^2\lambda^2}\bigg)^2+\bigg(\frac{2\lambda X}{(1+r^2\lambda^2)^2}\lambda^22Y\bigg)^2
\end{equation}

\begin{equation}\label{equation: nw2}
    |\nabla \tilde{u}_\lambda^2|^2=\bigg(-\frac{2\lambda Y}{(1+r^2\lambda^2)^2}\lambda^22Y+\frac{2\lambda}{1+r^2\lambda^2}\bigg)^2+\bigg(\frac{2\lambda Y}{(1+r^2\lambda^2)^2}\lambda^22X\bigg)^2
\end{equation}

\begin{align}\label{equation: nw3}
    |\nabla \tilde{u}_\lambda^3|^2=&\bigg(\frac{2X\lambda^2}{1+r^2\lambda^2}-\frac{(r^2\lambda^2-1)(2X\lambda^2)}{(1+r^2\lambda^2)^2}\bigg)^2+\bigg(\frac{2Y\lambda^2}{1+r^2\lambda^2}-\frac{(r^2\lambda^2-1)(2Y\lambda^2)}{(1+r^2\lambda^2)^2}\bigg)^2\\
    =& \bigg(\frac{4X\lambda^2}{(1+r^2\lambda^2)^2}\bigg)^2 +\bigg(\frac{4Y\lambda^2}{(1+r^2\lambda^2)^2}\bigg)^2
\end{align}

\noindent Combining \eqref{equation: nw1}, \eqref{equation: nw2} and \eqref{equation: nw3},

\begin{align}
    |\nabla \tilde{u}_\lambda|^2=&\frac{1}{(1+r^2\lambda^2)^4}\bigg(16r^2\lambda^4+32\lambda^6X^2Y^2+16\lambda^6(Y^4+X^4)\bigg)\\&+\frac{1}{(1+r^2\lambda^2)^2}8\lambda^2-2\frac{1}{(1+r^2\lambda^2)^3}8\lambda^4(X^2+Y^2)\\=&\frac{16}{(1+r^2\lambda^2)^4}\bigg(r^2\lambda^4+\lambda^6r^4\bigg)+\frac{1}{(1+r^2\lambda^2)^2}8\lambda^2-2\frac{1}{(1+r^2\lambda^2)^3}8\lambda^4r^2\\
    =&\frac{8}{(1+r^2\lambda^2)^4}\bigg(2r^2\lambda^4+2\lambda^6r^4-2\lambda^4r^2-2r^4\lambda^6+\lambda^2(1+2r^2\lambda^2+r^4\lambda^4)\bigg)\\
    =&\frac{8\lambda^2}{(1+r^2\lambda^2)^4}(1+r^2\lambda^2)^2=\frac{8\lambda^2}{(1+r^2\lambda^2)^2}
\end{align}
Finally, 
\begin{equation}
    \bigl|\nabla_{S^2}u_\lambda\bigr|^4
=
\Bigl(2\lambda^2\,\frac{(1+r^2)^2}{(1+\lambda^2r^2)^2}\Bigr)^2
=
4\,\lambda^4\,\frac{(1+r^2)^4}{(1+\lambda^2r^2)^4},
\end{equation}
\begin{align}
    \int_{S^2}
x_{3}(z)\,\bigl|\nabla u_{\lambda}(z)\bigr|^{4}
&\;=\;
\int_{\mathbb{C}}
{\frac{r^{2}-1}{1+r^{2}}}
\;\Bigl[\,4\lambda^{4}\,\frac{(1+r^{2})^{4}}{(1+\lambda^{2}r^{2})^{4}}\Bigr]
\;{\frac{4}{(1+r^{2})^{2}}}\\&\;=\;32\pi\,\lambda^4
\int_{0}^{\infty}
\frac{r\,(r^2-1)\,(1+r^2)}{(1+\lambda^2r^2)^4}\,dr =  \frac{16\pi}{3\lambda^2}(1 - \lambda^4).
\end{align}
The second identity \eqref{equation: barycenterzeroinsymmcomponents} follows from the symmetry of $u_\lambda$ and \eqref{general barycenter} follows from the fact that $R,\kappa$ are isometries.
\end{proof}
\begin{corollary}\label{corollary: strong convergence to rotation}
    Let $\{u_\varepsilon\}_\varepsilon$ be a $\gamma$- admissible sequence. Then, $u_\varepsilon$ converges smoothly either to a rotation or its conjugate, $u_\infty(x)=(R\circ\kappa^i)x$, $R\in SO(3)$, $i\in {1,2}$.
\end{corollary}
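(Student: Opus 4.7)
The plan is to combine the no-bubbling conclusion of Lemma~\ref{lemma: strong convergence of admissible seq} with the quartic barycenter identity of Lemma~\ref{lemma: quarticbarycenter} and the balancing relation \eqref{equation:balancingcondition}. Since the admissible sequence does not bubble, Theorem~\ref{thm:bubble-tree} (applied with $k=0$) first gives smooth convergence $u_\varepsilon \to u_0$ on all of $S^2$, where $u_0 \colon S^2 \to S^2$ is a harmonic map with $E(u_0) \le 8\pi - \gamma < 8\pi$. By Eells--Wood every harmonic map $S^2 \to S^2$ is holomorphic or antiholomorphic with energy $4\pi|\deg u_0|$, so $|\deg u_0| \in \{0,1\}$.

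Next I would rule out $\deg u_0 = 0$. A constant map $u_0 \equiv p$ with $|p|=1$ is itself a critical point of $E_\varepsilon$; linearising $u \mapsto -\Delta u - \varepsilon^{-2}(1-|u|^2)u$ at $p$ yields $\phi \mapsto -\Delta \phi + 2\varepsilon^{-2}\langle p,\phi\rangle p$, whose kernel in $C^{2,\alpha}(S^2,\mathbb R^3)$ is exactly the constants orthogonal to $p$: the normal component is killed by the invertible operator $-\Delta + 2/\varepsilon^2$, while tangential constants lie genuinely in the kernel. This two-dimensional kernel coincides with the tangent space to the two-dimensional family of unit-length constants through $p$, so a Lyapunov--Schmidt/implicit function argument shows that the only critical points of $E_\varepsilon$ in a $C^{2,\alpha}$-neighbourhood of $p$ are themselves constant unit vectors, contradicting the admissibility hypothesis that $u_\varepsilon$ is non-constant.

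When $|\deg u_0|=1$, write $u_0 = w_\lambda = R_1 \circ D_\lambda \circ \kappa^i \circ R_2$. From the Euler--Lagrange equation and the smooth convergence, $\varepsilon^{-2}(1-|u_\varepsilon|^2)u_\varepsilon = -\Delta u_\varepsilon \to -\Delta u_0 = |\nabla u_0|^2 u_0$ uniformly on $S^2$; taking the inner product with $u_\varepsilon$ and dividing by $|u_\varepsilon|^2$ (uniformly close to $1$ by Lemma~\ref{lem:unif-modulus}) then gives $\varepsilon^{-2}(1-|u_\varepsilon|^2) \to |\nabla u_0|^2$ uniformly on $S^2$, hence $\varepsilon^{-4}(1-|u_\varepsilon|^2)^2 \to |\nabla u_0|^4$ uniformly. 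The balancing identity \eqref{equation:balancingcondition} therefore passes to the limit to yield
\[
\int_{S^2} |\nabla u_0|^4 \, x \, d\sigma = 0,
\]
and the quartic barycenter formula \eqref{general barycenter} of Lemma~\ref{lemma: quarticbarycenter} forces $\tfrac{16\pi}{3\lambda^2}|1-\lambda^4| = 0$, so $\lambda = 1$. Thus $u_0 = R_1 \circ \kappa^i \circ R_2$, and since conjugation of $SO(3)$ by the reflection $\kappa$ lands back in $SO(3)$, this may be rewritten as $R\circ\kappa^i$ with $R \in SO(3)$, completing the proof.

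The main technical point I expect is upgrading the convergence $\varepsilon^{-2}(1-|u_\varepsilon|^2) \to |\nabla u_0|^2$ to a uniform statement on all of $S^2$, since the balancing relation is a global integral identity and the $C^\infty_{\mathrm{loc}}$-convergence of Theorem~\ref{thm:bubble-tree} alone would be insufficient; this is precisely what the no-bubbling step buys. A secondary but more delicate obstacle is the Lyapunov--Schmidt argument in the $\deg u_0 = 0$ case, whose two-dimensional kernel reflects the non-isolated nature of constant critical points and makes the non-constancy hypothesis on $u_\varepsilon$ genuinely essential.
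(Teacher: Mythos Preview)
Your argument for the degree-one case is correct and is exactly the paper's proof: strong convergence from Lemma~\ref{lemma: strong convergence of admissible seq} lets you pass \eqref{equation:balancingcondition} to the limit, and Lemma~\ref{lemma: quarticbarycenter} then forces $\lambda=1$. The paper in fact stops there and does not explicitly rule out a constant limit, so you are right to flag that case.

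Your Lyapunov--Schmidt treatment of the constant case, however, has a real gap. The Euler--Lagrange map $F_\varepsilon(u)=-\Delta u-\varepsilon^{-2}(1-|u|^2)u$ has $D^2F_\varepsilon$ of order $\varepsilon^{-2}$, so even though the linearisation at $p$ is uniformly invertible off its two-dimensional kernel, the quantitative implicit function theorem only guarantees uniqueness in a neighbourhood of $p$ whose radius shrinks with $\varepsilon$ (on the order of $\varepsilon^{2}$); bare smooth convergence $u_\varepsilon\to p$ carries no rate and does not place $u_\varepsilon$ inside that neighbourhood. One clean fix uses the paper's own tools: the phase equation gives $\tau(v_\varepsilon)=-2\nabla v_\varepsilon\cdot\nabla\ln|u_\varepsilon|$, and $\|\nabla\ln|u_\varepsilon|\|_\infty\to 0$ by smooth convergence, so Lemma~\ref{lem: energy gap tau} with $k=0$ yields $E(v_\varepsilon)\le C\|\nabla\ln|u_\varepsilon|\|_\infty^{2}\,E(v_\varepsilon)$, forcing $v_\varepsilon$ constant for $\varepsilon$ small; testing the remaining scalar equation $-\Delta|u_\varepsilon|=\varepsilon^{-2}(1-|u_\varepsilon|^2)|u_\varepsilon|$ against $1-|u_\varepsilon|$ then gives $|u_\varepsilon|\equiv 1$, contradicting admissibility.
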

\begin{proof}
By Lemma \ref{lemma: strong convergence of admissible seq},
    \begin{equation}
         0=\int \frac{1}{\varepsilon^4}(1-|u_\varepsilon|^2)^2x\to \int |\nabla u_\infty|^4x.
    \end{equation}
    By Lemma \ref{lemma: quarticbarycenter},
    \begin{equation}
        \int |\nabla u_\infty|^4x\iff u_\infty=R\circ \kappa^i.
    \end{equation}
    This concludes the proof of the Corollary.
\end{proof}
\begin{corollary}\label{corollary: apriorickepsestimate}
    Let $\gamma>0$. For any $\gamma$-admissible sequence $\{u_\varepsilon\}_\varepsilon$,
    \begin{equation}
        ||\nabla^k|u_\varepsilon|\,||_{L^\infty}\leq C_k\varepsilon^2.
    \end{equation}
\end{corollary}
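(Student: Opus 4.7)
The plan is to read off the $\varepsilon^{2}$-scale of $1-|u_{\varepsilon}|^{2}$ directly from the Euler--Lagrange equation \eqref{eq:Gintroductionele}, using the smooth convergence supplied by Corollary~\ref{corollary: strong convergence to rotation}, and then to transfer the estimate to $|u_{\varepsilon}|$ itself via the chain rule.

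By Corollary~\ref{corollary: strong convergence to rotation}, a $\gamma$-admissible sequence converges smoothly on $S^{2}$ to an isometry $u_{\infty}=R\circ\kappa^{i}\,x$. In particular, for every $k\ge 0$, the norm $\|u_{\varepsilon}\|_{C^{k}(S^{2})}$ is bounded independently of $\varepsilon$, and for $\varepsilon$ small enough $|u_{\varepsilon}|\ge 1/2$ pointwise. Taking the inner product of \eqref{eq:Gintroductionele} with $u_{\varepsilon}$ and dividing by $|u_{\varepsilon}|^{2}$ gives
\[
\frac{1-|u_{\varepsilon}|^{2}}{\varepsilon^{2}}
\;=\;
-\,\frac{\Delta u_{\varepsilon}\cdot u_{\varepsilon}}{|u_{\varepsilon}|^{2}}.
\]
The right-hand side is a smooth expression in $u_{\varepsilon}$ and its derivatives up to order two, with denominator bounded away from zero. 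Combined with the uniform $C^{k}$-bounds on $u_{\varepsilon}$, this shows that $\varepsilon^{-2}(1-|u_{\varepsilon}|^{2})$ is bounded in $C^{k}(S^{2})$ uniformly in $\varepsilon$; equivalently, $\|\nabla^{k}(1-|u_{\varepsilon}|^{2})\|_{L^{\infty}(S^{2})}\le C_{k}\varepsilon^{2}$ for every $k\ge 0$.

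Finally, I write $|u_{\varepsilon}|=\Phi(1-|u_{\varepsilon}|^{2})$ with $\Phi(s)=\sqrt{1-s}$, which is smooth near $s=0$ since $|u_{\varepsilon}|$ stays bounded away from zero. Fa\`a di Bruno (equivalently, a direct induction) expresses $\nabla^{k}|u_{\varepsilon}|$, for $k\ge 1$, as a polynomial in the derivatives $\nabla^{j}(1-|u_{\varepsilon}|^{2})$, $1\le j\le k$, with coefficients that are bounded derivatives of $\Phi$ evaluated at $1-|u_{\varepsilon}|^{2}$, hence uniformly bounded in $L^{\infty}$. Every such product contains at least one factor contributing $\varepsilon^{2}$, so $\|\nabla^{k}|u_{\varepsilon}|\|_{L^{\infty}}\le C_{k}\varepsilon^{2}$ as required. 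I do not anticipate a substantive obstacle: once Corollary~\ref{corollary: strong convergence to rotation} rules out concentration and delivers uniform $C^{k}$ bounds on $u_{\varepsilon}$, the estimate is a direct consequence of the pointwise equation; the genuine work lies entirely in establishing smooth convergence, which has already been carried out.
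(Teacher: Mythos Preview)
Your argument is correct and follows essentially the same route as the paper: both dot the Euler--Lagrange equation with $u_{\varepsilon}$ to see that $\varepsilon^{-2}(1-|u_{\varepsilon}|^{2})$ is uniformly controlled in $C^{k}$ thanks to the smooth convergence from Corollary~\ref{corollary: strong convergence to rotation}, and then read off the estimate on $\nabla^{k}|u_{\varepsilon}|$. The paper is slightly terser (it notes $\varepsilon^{-2}(1-|u_{\varepsilon}|^{2})\to 2$ in $C^{\infty}$ and says ``the claim follows by taking derivatives''), while you spell out the Fa\`a di Bruno step explicitly; there is no substantive difference.
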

\begin{proof}
    By Lemma \ref{lemma: strong convergence of admissible seq}, and Corollary \ref{corollary: strong convergence to rotation},
    \begin{equation}
        -\Delta u_\varepsilon u_\varepsilon =\frac{1-|u_\varepsilon|^2}{\varepsilon^2}|u_\varepsilon|^2\overset{C^\infty}{\to} -\Delta u_\infty u_\infty=|\nabla u_\infty|^2 = 2.
    \end{equation}
    so that
    \begin{equation}
        \frac{1-|u_\varepsilon|^2}{\varepsilon^2}\overset{C^\infty}{\to} 2.
    \end{equation}
    The claim follows by taking derivatives.
\end{proof}
\noindent By Lemma \ref{lemma: quarticbarycenter}, we see that there is an explicit relation between the quartic barycenter of a degree one harmonic map and its dilation component:
\begin{lemma}\label{lemma: dilation barycenter relation}
Assume a degree one harmonic map $w_\lambda:S^2\to S^2$ (using the notation of Definition \ref{definition: ulambda}) satisfies
\begin{equation}\label{equation: hypothesislemmabar-par}
\Bigl|\int_{S^2}x\,|\nabla w_\lambda|^4\Bigr|<\eta.
\end{equation}
Then
\[
|\lambda-1|\le \frac{3}{32\pi}\,\eta.
\]
\end{lemma}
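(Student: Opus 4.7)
The plan is to reduce this to a one-variable inequality via Lemma~\ref{lemma: quarticbarycenter}. Since $w_\lambda=R_1\circ D_\lambda\circ\kappa^i\circ R_2$ is obtained from the stereographic dilation $u_\lambda$ by pre/post-composition with isometries of $S^2$, the rotational invariance of the quartic barycenter vector established in \eqref{general barycenter} gives
\[
\Bigl|\int_{S^2}|\nabla w_\lambda|^4\,x\Bigr|
\;=\;
\Bigl|\int_{S^2}|\nabla u_\lambda|^4\,x\Bigr|
\;=\;
\frac{16\pi}{3\lambda^{2}}\bigl|1-\lambda^{4}\bigr|.
\]
Combining this with hypothesis \eqref{equation: hypothesislemmabar-par} reduces the statement to showing that the scalar inequality $\frac{16\pi}{3\lambda^{2}}|1-\lambda^{4}|<\eta$ forces $|\lambda-1|\le \frac{3}{32\pi}\eta$.

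To extract the factor $|\lambda-1|$, I would factor $1-\lambda^{4}=(1-\lambda)(1+\lambda)(1+\lambda^{2})$, which is legitimate since $\lambda>0$, so that the hypothesis reads
\[
\frac{16\pi}{3}\cdot\frac{(1+\lambda)(1+\lambda^{2})}{\lambda^{2}}\,|1-\lambda|\;<\;\eta.
\]
The remaining task is the elementary lower bound $\frac{(1+\lambda)(1+\lambda^{2})}{\lambda^{2}}\ge 2$ for all $\lambda>0$. Expanding,
\[
\frac{(1+\lambda)(1+\lambda^{2})}{\lambda^{2}}
=\lambda^{-2}+\lambda^{-1}+1+\lambda
\;\ge\;\lambda^{-1}+\lambda\;\ge\;2
\]
by AM--GM. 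Substituting this bound into the previous inequality yields $\frac{32\pi}{3}|1-\lambda|<\eta$, which rearranges to the desired conclusion $|\lambda-1|\le\frac{3}{32\pi}\eta$.

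There is essentially no obstacle here: the content of the lemma is the barycenter computation of Lemma~\ref{lemma: quarticbarycenter}, which has already been done, and the present statement only records the fact that the scalar function $\lambda\mapsto \frac{16\pi}{3\lambda^2}(1-\lambda^4)$ vanishes non-degenerately at $\lambda=1$ with a quantitative Lipschitz-type lower bound valid on the whole half-line $\lambda>0$. The only mild subtlety is to ensure the AM--GM step uses a lower bound that is uniform in $\lambda$ (i.e.\ does not deteriorate as $\lambda\to 0$ or $\lambda\to\infty$), which is why splitting off the constant term $1$ and bounding $\lambda^{-1}+\lambda\ge 2$ is preferable to, say, differentiating $h(\lambda)=\frac{16\pi}{3}(\lambda^{-2}-\lambda^{2})$ locally near $\lambda=1$.
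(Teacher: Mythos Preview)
Your proof is correct and essentially identical to the paper's: both invoke \eqref{general barycenter} to reduce to the scalar inequality $\frac{16\pi}{3}\bigl|\lambda^{-2}-\lambda^{2}\bigr|<\eta$, factor out $|\lambda-1|$, and use the lower bound $\frac{(1+\lambda)(1+\lambda^{2})}{\lambda^{2}}=\lambda^{-2}+\lambda^{-1}+1+\lambda\ge 2$. The only cosmetic difference is that you justify this last inequality via AM--GM on $\lambda+\lambda^{-1}$, whereas the paper simply asserts it.
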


\begin{proof}
By Lemma \ref{lemma: quarticbarycenter},
\[
\Bigl|\int_{S^2}x\,|\nabla w_\lambda|^4\Bigr|
=\bigg\vert\int_{S^2}x_3\,|\nabla u_\lambda|^4\bigg\vert
=\frac{16\pi}{3}\bigl\vert\lambda^{-2}-\lambda^2\bigr\vert.
\]
The hypothesis \eqref{equation: hypothesislemmabar-par} gives 
\begin{equation}
|\lambda^2-\lambda^{-2}|
=\bigg\vert\frac{(\lambda-1)(\lambda+1)(\lambda^2+1)}{\lambda^2}\bigg\vert\leq \frac{3}{16\pi}\eta.
\end{equation}
Since for all \(\lambda>0\),
\[
\frac{(\lambda+1)(\lambda^2+1)}{\lambda^2}
=\lambda + 1 + \lambda^{-1} + \lambda^{-2}
\;\ge\;2
\]
we have $\bigl|\,\lambda^2-\lambda^{-2}\bigr|
\;\ge\;2\,|\lambda-1|$ and hence
\[
2\,|\lambda-1|
\;<\;\frac{3}{16\pi}\,\eta
\quad\Longrightarrow\quad
|\lambda-1|<\frac{3}{32\pi}\,\eta,
\]
as claimed.
\end{proof}

\subsection{Closeness to the Möbius group} In this Section we recall the classical stability results for harmonic maps of the sphere in terms of energy defects. We then combine these with the phase equation satisfied by $v_\varepsilon$ to obtain quantitative estimates for the distance between $u_\varepsilon$ and rotations. We use the notation
\begin{equation}
    E(u):=\frac{1}{2}\int_{S^2}|\nabla u|^2,\quad \tau (u):=\Delta_{S^2}u+|\nabla u|^2 u.
\end{equation}

\begin{lemma}[{\cite[Lemma~1]{Topping97}}]\label{lem: energy gap tau}
There exist universal constants $\varepsilon_{0}>0$ and $\kappa>0$ such
that for any map $u\in C^{\infty}(S^{2},S^{2})$ of degree $\mathrm{deg}_{S^2}(u)=k$, if
\[
   E(u)\;-\;4\pi\,|k|\;<\;\varepsilon_{0},
\]
then
\[
   E(u)\;-\;4\pi\,|k|
   \;\le\;
   C\,
   \bigl\|\tau(u)\bigr\|_{L^{2}(S^{2})}^{2}.
\]
\end{lemma}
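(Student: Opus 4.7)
My plan is to prove a quantitative Lojasiewicz-type inequality around the moduli \(\mathcal M_{k}\) of degree-\(k\) harmonic maps \(S^{2}\to S^{2}\), via a compactness reduction followed by a second-variation argument. First I reduce to the case \(k\ge 0\) by post-composing with an orientation-reversing isometry of the target, which preserves both \(E(u)\) and \(\|\tau(u)\|_{L^{2}}\) while flipping the sign of the degree. If already \(\kappa\,\|\tau(u)\|_{L^{2}}^{2}\ge E(u)-4\pi k\) there is nothing to prove; otherwise \(\|\tau(u)\|_{L^{2}}\) is small, and together with the hypothesis \(E(u)-4\pi k<\varepsilon_{0}\ll 4\pi\) (strictly below a single bubble's energy) a Sacks--Uhlenbeck-type bubble-tree compactness for maps with small tension produces a smooth harmonic map \(h\colon S^{2}\to S^{2}\) of degree \(k\) with \(u\) as close to \(h\) in \(H^{2}\) as one wishes.

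The moduli \(\mathcal M_{k}\) is a smooth finite-dimensional manifold. At any \(h\in\mathcal M_{k}\) the Jacobi operator \(L_{h}:=-\Delta-|\nabla h|^{2}\) on sections of \(h^{*}TS^{2}\) has kernel exactly \(T_{h}\mathcal M_{k}\) and is strictly positive on the \(L^{2}\)-orthogonal complement of its kernel. I then write \(u=\exp_{h}(\xi)\), choosing \(h\) to minimise \(\|u-h\|_{H^{2}}\) over \(\mathcal M_{k}\), so that \(\xi\perp T_{h}\mathcal M_{k}\) in \(L^{2}\), and Taylor-expand at \(h\):
\[
E(u)-E(h)=\tfrac12\langle L_{h}\xi,\xi\rangle+O(\|\xi\|_{H^{1}}^{3}),\qquad \tau(u)=L_{h}\xi+O(\|\xi\|_{H^{2}}^{2}).
\]
Using the spectral gap \(\|L_{h}\xi\|_{L^{2}}\ge c\|\xi\|_{H^{1}}\) and its Cauchy--Schwarz consequence \(\langle L_{h}\xi,\xi\rangle\le c^{-1}\|L_{h}\xi\|_{L^{2}}^{2}\), and absorbing the cubic remainder in the near-harmonic regime, I would conclude
\[
E(u)-4\pi k\;\le\;C\|L_{h}\xi\|_{L^{2}}^{2}\;\le\;C'\|\tau(u)\|_{L^{2}}^{2}.
\]

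The principal obstacle I anticipate is obtaining the spectral gap \emph{uniformly} in \(h\in\mathcal M_{k}\): since \(\mathcal M_{k}\) is non-compact, as \(h\) Möbius-concentrates, \(|\nabla h|^{2}\) inside \(L_{h}\) blows up pointwise and a naive estimate would give \(c=c(h)\searrow 0\). The resolution is to exploit the conformal invariance of \(E\) and the Möbius equivariance of the Jacobi operator: pulling back by the Möbius transformation that normalises \(h\) to a fixed reference harmonic map conjugates \(L_{h}\) to a single, fixed coercive operator with universal spectral gap, and the Taylor expansion of \(E\) transforms covariantly so all estimates survive the pullback. A subsidiary technical point is the well-posedness of the nearest-point projection onto \(\mathcal M_{k}\) in an \(H^{2}\)-tubular neighbourhood, which is settled by the same equivariance argument combined with the implicit function theorem.
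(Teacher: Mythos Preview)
The paper does not supply its own proof of this lemma; it simply quotes the result from \cite{Topping97}. So there is no in-paper argument to compare against beyond the citation.

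Your overall \L ojasiewicz strategy is reasonable, but the resolution you propose for the uniform spectral gap has a genuine gap. The quantity $\|\tau(u)\|_{L^{2}}$ is \emph{not} conformally invariant: if $\phi\in\mathrm{M\ddot ob}(S^{2})$ has $\phi^{*}g=\lambda^{2}g$, then $\tau(u\circ\phi)=\lambda^{2}\bigl(\tau(u)\circ\phi\bigr)$ and hence
\[
\|\tau(u\circ\phi)\|_{L^{2}}^{2}=\int_{S^{2}}(\lambda\circ\phi^{-1})^{2}\,|\tau(u)|^{2}\,d\sigma,
\]
which carries an uncontrolled weight. Since $E(u)-4\pi k$ \emph{is} conformally invariant, the inequality you are after is not, and you cannot pull back to a fixed reference $h_{0}$ while keeping both sides comparable. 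The same computation shows $L_{h}\xi=\lambda^{2}\bigl(L_{h_{0}}\eta\bigr)\circ\phi$ for $\eta=\xi\circ\phi^{-1}$: only the quadratic form $\langle L_{h}\xi,\xi\rangle$ (the second variation of $E$) is invariant, not the $L^{2}$ norms entering your spectral bound $\|L_{h}\xi\|_{L^{2}}\ge c\|\xi\|_{H^{1}}$, nor the elliptic constant in $\|\xi\|_{H^{2}}\le C\|L_{h}\xi\|_{L^{2}}$. A pure contradiction/compactness argument does not rescue this either: with $E(u_{n})-4\pi k\to 0$ and $\|\tau(u_{n})\|\to 0$ one can still bubble (e.g.\ constant body plus a single degree-$k$ holomorphic bubble has total defect zero), so the nearest harmonic map $h_{n}$ may run off to infinity in $\mathcal M_{k}$ and you never land near a fixed $h$.

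Topping's actual proof bypasses the moduli space altogether. For $k\ge 0$ one has the exact identity $E(u)-4\pi k=2\!\int_{S^{2}}|\bar\partial u|^{2}$, and the argument is a direct estimate, via the complex/K\"ahler structure of domain and target and an integration-by-parts (Weitzenb\"ock-type) identity on $S^{2}$, bounding $\int|\bar\partial u|^{2}$ by $C\|\tau(u)\|_{L^{2}}^{2}$ in the small-defect regime. No nearest-point projection onto $\mathcal M_{k}$, no Jacobi spectral gap, and hence no uniformity issue.
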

\begin{theorem}\label{thm:moebius-approx}(\cite{BMMS}, see also Theorem 1.1. \cite{ToppingRigidity}, \cite{HirschZemas})
There exists \(C>0\) such that for every map \(u\in W^{1,2}(S^{2},S^{2})\) of degree one
there exists a harmonic map \(v_\lambda:S^{2}\to S^{2}\) satisfying
\[
        \int_{S^{2}} |D(u-v_\lambda)|^{2}\,d\sigma
        \;\le\;
        C\,(\,E(u)-4\pi\,).
\]
\end{theorem}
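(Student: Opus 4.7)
The plan is to split the argument according to the size of the energy defect $\eta := E(u) - 4\pi \ge 0$, treating the large-defect regime by a crude triangle-inequality bound and the small-defect regime by a quantitative second-variation argument at a nearby Möbius map. Fix a small $\delta_0 > 0$ to be determined. \textbf{Large defect:} if $\eta \ge \delta_0$, take $v_\lambda$ to be any fixed element of the Möbius family $\mathcal{M}$ (say the identity map of $S^2$). Then
\[
\int_{S^2} |D(u - v_\lambda)|^2 \, d\sigma
\;\le\; 4\,E(u) + 16\pi \;=\; 4\eta + 32\pi \;\le\; \Bigl(4 + \tfrac{32\pi}{\delta_0}\Bigr)\,\eta ,
\]
which gives the claim with $C = 4 + 32\pi/\delta_0$.

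\textbf{Small defect:} for $\eta < \delta_0$ I would argue by contradiction and compactness. Suppose a sequence $(u_n)$ of degree-one $W^{1,2}$ maps violates the estimate, with $\eta_n := E(u_n) - 4\pi \to 0$ and $\delta_n^2 := \inf_{v \in \mathcal{M}} \int |D(u_n - v)|^2$ satisfying $\delta_n^2 / \eta_n \to \infty$. Both sides of the inequality, as well as $\eta_n$, are invariant under precomposition of $u_n$ with conformal diffeomorphisms of $S^2$ (by conformal invariance of the Dirichlet energy in dimension two) and postcomposition with isometries of $S^2$. Exploiting this freedom together with the classical Brezis--Coron bubble-compactness for degree-one energy-minimising sequences, I would normalise so that $u_n \to v_0$ smoothly for a fixed $v_0 \in \mathcal{M}$, which I take to be the identity.

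Setting $\phi_n := u_n - v_0$, the pointwise constraint $|u_n| = |v_0| = 1$ forces $v_0 \cdot \phi_n = -\tfrac12 |\phi_n|^2$; combining this with harmonicity of $v_0$ and a single integration by parts produces the \emph{exact} identity
\[
\eta_n \;=\; \tfrac12 \int_{S^2}\bigl(|D\phi_n|^2 - |Dv_0|^2 \,|\phi_n|^2\bigr)\, d\sigma .
\]
Decomposing $\phi_n = \phi_n^T + \phi_n^N$ into tangential and normal components at $v_0$, the normal part $\phi_n^N = -\tfrac12 |\phi_n|^2 v_0$ is quadratic in $\phi_n$, so the right-hand side equals $\tfrac12 J_{v_0}[\phi_n^T, \phi_n^T]$ up to an $O(\|\phi_n\|_{L^\infty} \|\phi_n\|_{H^1}^2)$ remainder, where $J_{v_0}$ is the Jacobi operator of $v_0$. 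Its kernel is the six-dimensional space of conformal Killing fields tangent to $\mathcal{M}$; on the $L^2$-orthogonal complement the spectral gap gives $J_{v_0}[\psi,\psi] \ge c_0 \|\psi\|_{H^1}^2$. Applying the inverse function theorem to the nonlinear projection onto $\mathcal{M}$ near $v_0$, I would pick $v_n \in \mathcal{M}$ with $\|v_n - v_0\|_{H^1} \lesssim \|\phi_n\|_{H^1}$ so that $\phi_n' := u_n - v_n$ has tangential component orthogonal to $\ker J_{v_n}$. Coercivity combined with the energy identity then yields
\[
\|\phi_n'\|_{H^1}^2 \;\le\; C_1 \eta_n + C_1 \|\phi_n'\|_{H^1}^3 ,
\]
and a bootstrap (since $\|\phi_n'\|_{H^1} \to 0$) produces $\|\phi_n'\|_{H^1}^2 \le 2 C_1 \eta_n$, contradicting $\delta_n^2 \gg \eta_n$.

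The hard part will be the interplay between two sources of degeneracy: the noncompactness of the Möbius group (which, absent the conformal-invariance normalisation, would allow $u_n$ to ``bubble'' to a concentration point and escape any near-$v_0$ regime), and the six-dimensional kernel of $J_{v_0}$ (which forces one to project onto $\mathcal{M}$ via an implicit-function step before the Jacobi quadratic form is coercive). Once both issues are addressed, the cubic remainder from the normal and cross terms is absorbed by the smallness of $\|\phi_n\|_{L^\infty}$, and the argument reduces to the standard spectral-gap coercivity for harmonic-sphere rigidity.
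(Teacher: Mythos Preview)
The paper does not give its own proof of this theorem: it is quoted from \cite{BMMS}, \cite{ToppingRigidity}, \cite{HirschZemas} and used as a black-box input in the proof of Lemma~\ref{lemma: controlofdilationparameterintermsoftau}, so there is no in-paper argument to compare against.

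Your outline follows the standard ``compactness plus spectral gap'' route and is essentially correct, but two technical points need tightening. First, the $u_n$ are arbitrary $W^{1,2}$ degree-one maps with no governing PDE, so after conformal normalisation you only obtain strong $W^{1,2}$ convergence to $v_0$, not smooth convergence; the underlying qualitative compactness (that $\eta_n\to 0$ forces $\inf_{v\in\mathcal M}\|u_n-v\|_{H^1}\to 0$ after a suitable M\"obius pull-back) is a separate concentration-compactness fact that must be supplied, otherwise the reduction to the perturbative regime is circular. Second, for the same reason there is no $L^\infty$ smallness for $\phi_n$ in two dimensions, so the cubic and quartic remainders coming from $\phi_n^N=-\tfrac12|\phi_n|^2 v_n$ must be estimated via the Ladyzhenskaya inequality $\|\phi\|_{L^4}^2\le C\|\phi\|_{L^2}\|\phi\|_{H^1}$ together with the trivial pointwise bound $|\phi_n|\le 2$, yielding $\|\phi_n'\|_{H^1}^2\le C_1\eta_n+C_1\|\phi_n'\|_{H^1}^3$; the bootstrap then closes once $\|\phi_n'\|_{H^1}$ is small. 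With these adjustments the argument works and is closest in spirit to the direct proofs in \cite{BMMS} and \cite{HirschZemas}; the proof in \cite{ToppingRigidity} takes a different path via the harmonic map heat flow.
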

We now prove that the maps $v_\varepsilon=\frac{u_\varepsilon}{|u_\varepsilon|}$, coming from a $\gamma$-admissible sequence have small tension field and small quartic barycenter, which allows us to prove that they must be close to a rotation by Lemma \ref{lem: energy gap tau} and Theorem \ref{thm:moebius-approx}.
\begin{lemma}\label{lemma: controlofdilationparameterintermsoftau}
    Let $\gamma>0$ and $\{u_\varepsilon\}_\varepsilon$ a $\gamma$-admissible sequence. Then, there exist a sequence $\lambda_\varepsilon$, $|1-\lambda_\varepsilon|\leq C\,(||\nabla|u_\varepsilon|\,||_2+|\nabla^2|u_\varepsilon|\,||_2)\leq  C \varepsilon^2$, and harmonic maps $w_{\lambda_\varepsilon}:S^2\to S^2$ satisfying
    \[
        \int |D(v_\varepsilon-w_{\lambda_\varepsilon})|^2\leq C\,(||\nabla|u_\varepsilon|\,||_2^2+|\nabla^2|u_\varepsilon|\,||_2^2)\leq  C\varepsilon^4,
    \]
    where $v_\varepsilon:=\frac{u_\varepsilon}{|u_\varepsilon|}$.
\end{lemma}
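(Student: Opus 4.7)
The plan is to bootstrap from the phase equation \eqref{eq:phase-divergence}: it forces $v_\varepsilon$ to have a small tension field, which via Lemma~\ref{lem: energy gap tau} and Theorem~\ref{thm:moebius-approx} yields a nearby degree-one harmonic map $w_{\lambda_\varepsilon}$; the balancing identity \eqref{equation:balancingcondition}, fed into the dilation--barycenter relation of Lemma~\ref{lemma: dilation barycenter relation}, then pins $\lambda_\varepsilon$ close to~$1$.

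First I would rewrite \eqref{eq:phase-divergence} in non-divergence form to read off the tension field
\[
\tau(v_\varepsilon)\;=\;\Delta v_\varepsilon+|\nabla v_\varepsilon|^{2}v_\varepsilon\;=\;-\,\frac{2}{|u_\varepsilon|}\,\nabla v_\varepsilon\!\cdot\!\nabla|u_\varepsilon|.
\]
Since $|u_\varepsilon|\to1$ uniformly (Lemma~\ref{lem:unif-modulus}) and $\|\nabla v_\varepsilon\|_{L^\infty}$ is uniformly bounded by the smooth convergence of $v_\varepsilon$ to a rotation (Corollaries~\ref{corollary: strong convergence to rotation} and~\ref{corollary: apriorickepsestimate}), this yields $\|\tau(v_\varepsilon)\|_{L^2}^{2}\le C\|\nabla|u_\varepsilon|\|_{L^2}^{2}$. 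As $|\deg v_\varepsilon|=1$, Lemma~\ref{lem: energy gap tau} gives $E(v_\varepsilon)-4\pi\le C\|\nabla|u_\varepsilon|\|_{L^2}^{2}$, and Theorem~\ref{thm:moebius-approx} supplies a degree-one harmonic map $w_{\lambda_\varepsilon}$ with $\int|D(v_\varepsilon-w_{\lambda_\varepsilon})|^{2}\le C\|\nabla|u_\varepsilon|\|_{L^2}^{2}\le C\varepsilon^{4}$; proximity of $v_\varepsilon$ to the limiting rotation keeps $\lambda_\varepsilon$ bounded, so $\|\nabla w_{\lambda_\varepsilon}\|_{L^\infty}$ is uniformly controlled.

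To estimate $|1-\lambda_\varepsilon|$ I would dot \eqref{eq:Gintroductionele} with $u_\varepsilon$ and use $-u_\varepsilon\!\cdot\!\Delta u_\varepsilon=|\nabla u_\varepsilon|^{2}-\tfrac12\Delta|u_\varepsilon|^{2}$ together with the orthogonal decomposition $|\nabla u_\varepsilon|^{2}=|u_\varepsilon|^{2}|\nabla v_\varepsilon|^{2}+|\nabla|u_\varepsilon||^{2}$ to get
\[
\frac{(1-|u_\varepsilon|^{2})^{2}}{\varepsilon^{4}}\;=\;\frac{1}{|u_\varepsilon|^{4}}\Bigl(|\nabla u_\varepsilon|^{2}-\tfrac12\Delta|u_\varepsilon|^{2}\Bigr)^{2}\;=\;|\nabla v_\varepsilon|^{4}+R,
\]
where $R$ is a polynomial in $|\nabla v_\varepsilon|$, $\nabla|u_\varepsilon|$ and $\Delta|u_\varepsilon|^{2}$ whose every monomial contains at least one factor of $\nabla|u_\varepsilon|$ or $\Delta|u_\varepsilon|^{2}$. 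Combining the $L^\infty$-bound on $|\nabla v_\varepsilon|$ with Corollary~\ref{corollary: apriorickepsestimate} yields $\int|R|\le C(\|\nabla|u_\varepsilon|\|_{L^2}+\|\nabla^{2}|u_\varepsilon|\|_{L^2})$, and since the balancing identity \eqref{equation:balancingcondition} annihilates the left-hand side after integrating against~$x$,
\[
\Bigl|\int_{S^2} x\,|\nabla v_\varepsilon|^{4}\Bigr|\;\le\;C\bigl(\|\nabla|u_\varepsilon|\|_{L^2}+\|\nabla^{2}|u_\varepsilon|\|_{L^2}\bigr).
\]
Factoring $a^{4}-b^{4}=(a-b)(a+b)(a^{2}+b^{2})$ and using the $L^\infty$-bounds on $|\nabla v_\varepsilon|,|\nabla w_{\lambda_\varepsilon}|$ together with the $W^{1,2}$ bound from the previous step transfers the estimate to $\int x\,|\nabla w_{\lambda_\varepsilon}|^{4}$ with an error $C\|\nabla(v_\varepsilon-w_{\lambda_\varepsilon})\|_{L^2}\le C\|\nabla|u_\varepsilon|\|_{L^2}$, after which Lemma~\ref{lemma: dilation barycenter relation} yields $|1-\lambda_\varepsilon|\le C(\|\nabla|u_\varepsilon|\|_{L^2}+\|\nabla^{2}|u_\varepsilon|\|_{L^2})\le C\varepsilon^{2}$.

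The main technical obstacle is controlling the remainder $R$ in $L^{1}$ with the \emph{linear} bound $C(\|\nabla|u_\varepsilon|\|_{L^2}+\|\nabla^{2}|u_\varepsilon|\|_{L^2})$: the monomials quadratic in $\nabla|u_\varepsilon|$ are tamed by the uniform estimate $\|\nabla|u_\varepsilon|\|_{L^\infty}=O(\varepsilon^{2})$ from Corollary~\ref{corollary: apriorickepsestimate}, whereas the terms linear in $\Delta|u_\varepsilon|^{2}$ must be handled by rewriting $\Delta|u_\varepsilon|^{2}=2|u_\varepsilon|\Delta|u_\varepsilon|+2|\nabla|u_\varepsilon||^{2}$ and pairing against the smooth factor $x$ by $L^{1}$--$L^{\infty}$ duality---this is precisely what forces the full $H^{2}$-norm of $|u_\varepsilon|$, rather than just $\|\nabla|u_\varepsilon|\|_{L^{2}}$, into the final statement.
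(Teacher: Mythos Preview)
Your proposal is correct and follows essentially the same route as the paper: bound $\tau(v_\varepsilon)$ via the phase equation, invoke Lemma~\ref{lem: energy gap tau} and Theorem~\ref{thm:moebius-approx} to produce $w_{\lambda_\varepsilon}$, then use the balancing identity \eqref{equation:balancingcondition} to control the quartic barycenter of $v_\varepsilon$ (this is exactly the content of Lemma~\ref{lemma: small veps barycenter for veps}, which the paper states separately but you derive inline), transfer to $w_{\lambda_\varepsilon}$ via the $a^4-b^4$ factorization, and finish with Lemma~\ref{lemma: dilation barycenter relation}. Your identification of the $\Delta|u_\varepsilon|^2$ terms as the source of the $\|\nabla^2|u_\varepsilon|\|_{L^2}$ dependence matches the paper's computation.
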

\begin{proof}
    By the phase equation of Lemma \ref{lem:phase-eq},
    \begin{equation}
        |\tau(v_\varepsilon)|^2=|2\nabla v_\varepsilon\cdot\nabla\ln\,|u_\varepsilon||^2.
    \end{equation}
    By Lemma \ref{lemma: strong convergence of admissible seq} and Corollary \ref{corollary: strong convergence to rotation},
    \begin{equation}
        \frac{1}{\varepsilon^2}|\nabla\ln|u_\varepsilon||=\bigg\vert\frac{1}{2|u_\varepsilon|^2}\nabla\frac{(1-|u_\varepsilon|^2)}{\varepsilon^2}\bigg\vert\to \frac{1}{2}|\nabla|\nabla u_\infty|^2|=0,
    \end{equation}
    which yields
    \begin{equation}\label{equation: eps4estimateoftau}
        ||\tau(v_\varepsilon)||_2^2\leq C\varepsilon^4 \int |\nabla v_\varepsilon|^2\leq C\varepsilon^4.
    \end{equation}
    By Theorem \ref{thm:moebius-approx} and Lemma \ref{lem: energy gap tau}, combined with \eqref{equation: eps4estimateoftau}, there exists a Möbius transformation $w_{\lambda_\varepsilon}$ such that
    \[
        \int |Dw_ {\lambda_\varepsilon}-Dv_\varepsilon|^2\leq C||\tau(v_\varepsilon)||_2^2\leq C\varepsilon^4.
    \]
    It remains to show that $|1-\lambda_\varepsilon|\leq C\varepsilon^2$. To do this we estimate the barycenter
    \begin{align}
        \bigg\vert\int (|\nabla w_{\lambda_\varepsilon}|^4-|\nabla v_\varepsilon|^4)\,x\,\bigg\vert&=\bigg\vert\int (|\nabla w_{\lambda_\varepsilon}|^2-|\nabla v_\varepsilon|^2)(|\nabla w_{\lambda_\varepsilon}|^2+|\nabla v_\varepsilon|^2)\,x\,\bigg\vert\\&=\bigg\vert\int (\nabla w_{\lambda_\varepsilon}-\nabla v_\varepsilon)(\nabla w_{\lambda_\varepsilon}+\nabla v_\varepsilon)(|\nabla w_{\lambda_\varepsilon}|^2+|\nabla v_\varepsilon|^2)\,x\,\bigg\vert\\&\leq \bigg(\int |D(w_{\lambda_\varepsilon}-v_\varepsilon)|^2\bigg)^{1/2}C(w_{\lambda_\varepsilon},v_\varepsilon)\\
        &\leq C||\tau(v_\varepsilon)||_2\leq C\varepsilon^2,
    \end{align}
    where in the last step we used that $v_\varepsilon$ is converging strongly to a rotation or its conjugate and hence there are uniform a priori bounds in arbitrary norm. Finally, by Lemma \ref{lemma: small veps barycenter for veps},
    \begin{equation}
        \bigg\vert \int |\nabla w_{\lambda_\varepsilon}|^4\,x\,\bigg\vert\leq  C\,(||\nabla|u_\varepsilon|\,||_2+|\nabla^2|u_\varepsilon|\,||_2)\leq C\varepsilon^2
    \end{equation}
    and by Lemma \ref{lemma: dilation barycenter relation}, $|\lambda_\varepsilon-1|\leq C\,(||\nabla|u_\varepsilon|\,||_2+|\nabla^2|u_\varepsilon|\,||_2)\leq C\varepsilon^2$.
\end{proof}

\noindent We can use the balancing condition \eqref{equation:balancingcondition} to obtain a quartic barycenter estimate for $v_\varepsilon$:
\begin{lemma}\label{lemma: small veps barycenter for veps}
    Let $u_\varepsilon$ be a $\gamma$-admissible. Then, there exists $\varepsilon_0>0$ such that for all $\varepsilon<\varepsilon_0$,
    \begin{equation}
      \bigg\vert  \int |\nabla v_\varepsilon|^4x \bigg\vert \leq C\,(||\nabla|u_\varepsilon|\,||_2+|\nabla^2|u_\varepsilon|\,||_2)\leq C\varepsilon^2
    \end{equation}
\end{lemma}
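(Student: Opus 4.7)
The key observation is that by the Pohozaev/balancing identity \eqref{equation:balancingcondition}, the quantity $\int \varepsilon^{-4}(1-|u_\varepsilon|^2)^2 x$ vanishes identically, so I expect to reduce the estimate on $\int|\nabla v_\varepsilon|^4 x$ to an estimate on the difference between $|\nabla v_\varepsilon|^2$ and $\varepsilon^{-2}(1-|u_\varepsilon|^2)$. The clean way to extract this difference is to combine the Euler--Lagrange equation with the pointwise identity
\[
-\Delta u_\varepsilon\cdot u_\varepsilon \;=\; -\tfrac12\Delta|u_\varepsilon|^2+|\nabla u_\varepsilon|^2,
\]
which, together with \eqref{eq:Gintroductionele}, rewrites $|\nabla u_\varepsilon|^2$ as $\frac{(1-|u_\varepsilon|^2)|u_\varepsilon|^2}{\varepsilon^2}+\tfrac12\Delta|u_\varepsilon|^2$. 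Plugging this into the formula $|\nabla v_\varepsilon|^2=\bigl(|\nabla u_\varepsilon|^2-|\nabla|u_\varepsilon||^2\bigr)/|u_\varepsilon|^2$ from Lemma~\ref{lem:phase-eq} yields the decomposition
\[
|\nabla v_\varepsilon|^2 \;=\; A_\varepsilon+B_\varepsilon,\qquad A_\varepsilon:=\frac{1-|u_\varepsilon|^2}{\varepsilon^2},\quad B_\varepsilon:=\frac{1}{|u_\varepsilon|^2}\Bigl(\tfrac12\Delta|u_\varepsilon|^2-|\nabla|u_\varepsilon||^2\Bigr).
\]

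\textbf{Main computation.} Squaring and integrating against $x$ gives
\[
\int|\nabla v_\varepsilon|^4\,x \;=\; \int A_\varepsilon^2\,x\;+\;\int\bigl(2A_\varepsilon B_\varepsilon+B_\varepsilon^2\bigr)\,x.
\]
The first integral is exactly $\int\varepsilon^{-4}(1-|u_\varepsilon|^2)^2 x$, which vanishes by \eqref{equation:balancingcondition}. So the task reduces to bounding $\int(2A_\varepsilon B_\varepsilon+B_\varepsilon^2)\,x$. By Corollary~\ref{corollary: apriorickepsestimate} the factor $A_\varepsilon=\varepsilon^{-2}(1-|u_\varepsilon|^2)$ is uniformly bounded (it converges smoothly to $2$), and by Lemma~\ref{lem:unif-modulus} we have $|u_\varepsilon|\ge\tfrac12$ for $\varepsilon$ small. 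Therefore
\[
\Bigl|\!\int 2A_\varepsilon B_\varepsilon\,x\Bigr|
\;\le\; C\!\int\!\bigl(|\Delta|u_\varepsilon|^2|+|\nabla|u_\varepsilon||^2\bigr)
\;\le\; C\bigl(\|\nabla^2|u_\varepsilon|\|_2+\|\nabla|u_\varepsilon|\|_\infty\,\|\nabla|u_\varepsilon|\|_2\bigr),
\]
after expanding $\Delta|u_\varepsilon|^2=2|u_\varepsilon|\Delta|u_\varepsilon|+2|\nabla|u_\varepsilon||^2$ and applying Cauchy--Schwarz (recall $S^2$ has finite volume). The quadratic term $\int B_\varepsilon^2\,x$ is controlled by $\|B_\varepsilon\|_\infty\int|B_\varepsilon|$, which is a strictly smaller term since Corollary~\ref{corollary: apriorickepsestimate} yields $\|B_\varepsilon\|_\infty\le C\varepsilon^2$. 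Combining and again using $\|\nabla|u_\varepsilon|\|_\infty\le C\varepsilon^2$ gives the claimed bound
\[
\Bigl|\!\int|\nabla v_\varepsilon|^4\,x\Bigr|\;\le\; C\bigl(\|\nabla|u_\varepsilon|\|_2+\|\nabla^2|u_\varepsilon|\|_2\bigr).
\]
The second inequality of the statement is then immediate from the $L^\infty$ bounds of Corollary~\ref{corollary: apriorickepsestimate}.

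\textbf{Main obstacle.} The only delicate point is verifying that the balancing identity \eqref{equation:balancingcondition} exactly kills the \emph{leading} term $\int A_\varepsilon^2\,x$; the rest is bookkeeping with the $C^\infty$-estimates on $|u_\varepsilon|$ already supplied by the preceding lemmas. There is no circularity since the lemma being invoked inside Lemma~\ref{lemma: controlofdilationparameterintermsoftau} is this one, and the derivation above uses only the admissibility hypothesis plus Corollary~\ref{corollary: apriorickepsestimate} (whose proof did not use any Möbius approximation).
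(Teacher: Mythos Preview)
Your proof is correct and follows essentially the same approach as the paper. Both arguments use the modulus equation $-\tfrac12\Delta|u_\varepsilon|^2=\frac{(1-|u_\varepsilon|^2)|u_\varepsilon|^2}{\varepsilon^2}-|\nabla u_\varepsilon|^2$ together with the formula $|\nabla v_\varepsilon|^2=(|\nabla u_\varepsilon|^2-|\nabla|u_\varepsilon||^2)/|u_\varepsilon|^2$ to show that $|\nabla v_\varepsilon|^4$ differs from $\varepsilon^{-4}(1-|u_\varepsilon|^2)^2$ by terms controlled by $|\nabla|u_\varepsilon||+|\nabla^2|u_\varepsilon||$, and then invoke the balancing identity \eqref{equation:balancingcondition}; the only cosmetic difference is that you package this as an additive decomposition $|\nabla v_\varepsilon|^2=A_\varepsilon+B_\varepsilon$ and expand the square, while the paper factors $|\nabla v_\varepsilon|^4-A_\varepsilon^2$ as a difference of squares.
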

\begin{proof}
By strong convergence shown in Lemma \ref{lemma: strong convergence of admissible seq},
\begin{equation}
   \varepsilon^2 \bigg\vert\,\Delta\,\frac{1-|u_\varepsilon|^2}{\varepsilon^2}\bigg\vert=2\,\bigg\vert\frac{1-|u_\varepsilon|^2}{\varepsilon^2}|u_\varepsilon|^2-|\nabla u_\varepsilon|^2\bigg\vert\leq C\varepsilon^2
\end{equation}
so by \eqref{equation: formulanablaveps}
\begin{align}
   &\bigg\vert\,|\,\nabla v_\varepsilon|^4-\frac{(1-|u_\varepsilon|^2)^2}{\varepsilon^4}\bigg\vert=\bigg\vert\bigg(\frac{|\nabla u_\varepsilon|^2}{|u_\varepsilon|^2}-\frac{|\nabla|u_\varepsilon||^2}{|u_\varepsilon|^2}\bigg)^2 -\frac{(1-|u_\varepsilon|^2)^2}{\varepsilon^4}\bigg\vert\\
   \leq &\bigg\vert\big(\frac{1-|u_\varepsilon|^2}{\varepsilon^2}-\frac{|\nabla u_\varepsilon|^2}{|u_\varepsilon|^2}\big)\big(\frac{1-|u_\varepsilon|^2}{\varepsilon^2}+\frac{|\nabla u_\varepsilon|^2}{|u_\varepsilon|^2}\big)\bigg\vert +C\,(|\nabla|u_\varepsilon|\,|+|\nabla|u_\varepsilon|\,|^2)
   \\\leq & C\,(|\nabla|u_\varepsilon|\,|+|\nabla^2|u_\varepsilon|\,|)
\end{align}
Hence,
    \begin{equation}
        \int \frac{(1-|u_\varepsilon|^2)^2}{\varepsilon^4}x=0\implies \bigg\vert\int |\nabla v_\varepsilon|^4\,x\,\bigg\vert\leq C\,(||\nabla|u_\varepsilon|\,||_2+|\nabla^2|u_\varepsilon|\,||_2)\leq  C\varepsilon^2.
    \end{equation}
\end{proof}
\noindent As a corollary, we can prove that $v_\varepsilon$ must be quantitatively close to a rotation:
\begin{corollary}\label{corollary: epsfourclosenesstoidentity}
    In particular, up to a rotation and possible complex conjugation of $v_\varepsilon$,
    \begin{equation}
        \int |D(v_\varepsilon-\mathrm{id})|^2\leq C\,(||\nabla|u_\varepsilon|\,||_2^2+|\nabla^2|u_\varepsilon|\,||_2^2)\leq  C\varepsilon^4    
    \end{equation}
\end{corollary}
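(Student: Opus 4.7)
The plan is to derive the corollary directly from Lemma \ref{lemma: controlofdilationparameterintermsoftau} by comparing $u_{\lambda_\varepsilon}$ to the identity and then using the triangle inequality, the key quantitative input being the smooth dependence of the dilation family $u_\lambda$ on $\lambda$ at $\lambda=1$.

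First I would invoke Lemma \ref{lemma: controlofdilationparameterintermsoftau} to produce a degree-one harmonic map $w_{\lambda_\varepsilon}:S^2\to S^2$ with
\[
\int |D(v_\varepsilon-w_{\lambda_\varepsilon})|^2\leq C\,(\,||\nabla|u_\varepsilon|\,||_2^2+||\nabla^2|u_\varepsilon|\,||_2^2)\leq C\varepsilon^4,\qquad |1-\lambda_\varepsilon|\leq C\varepsilon^2.
\]
By the classification of degree-one harmonic spheres recalled before Definition \ref{definition: ulambda}, $w_{\lambda_\varepsilon}=R_1\circ u_{\lambda_\varepsilon}\circ\kappa^i\circ R_2$ for some $R_1,R_2\in SO(3)$ and $i\in\{0,1\}$. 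Since rotations and complex conjugation act as isometries of $S^2$, pre-composing $v_\varepsilon$ with $(\kappa^i\circ R_2)^{-1}$ and post-composing with $R_1^{-1}$ preserves the $H^1$ seminorm; this absorbs the ``rotation and possible complex conjugation'' of the statement and reduces matters to bounding $\int|D(v_\varepsilon-u_{\lambda_\varepsilon})|^2$ and $\int|D(u_{\lambda_\varepsilon}-\mathrm{id})|^2$ by $C\varepsilon^4$ separately. The first bound is already known from Lemma \ref{lemma: controlofdilationparameterintermsoftau}.

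For the second bound, I would establish the Lipschitz-type estimate
\[
\int_{S^2}|D(u_\lambda-\mathrm{id})|^2\;\leq\;C(\lambda-1)^2
\]
valid for $\lambda$ in a neighborhood of $1$. Since $u_1=\mathrm{id}$, this is just Taylor's theorem applied to the smooth curve $\lambda\mapsto u_\lambda\in H^1(S^2,\mathbb R^3)$. Concretely, differentiating the explicit formula
\[
\tilde u_\lambda(X,Y)=\Bigl(\tfrac{2\lambda X}{1+\lambda^2r^2},\,\tfrac{2\lambda Y}{1+\lambda^2r^2},\,\tfrac{\lambda^2r^2-1}{1+\lambda^2r^2}\Bigr)
\]
recorded in the proof of Lemma \ref{lemma: quarticbarycenter} with respect to $\lambda$ and in the spatial variables yields rational expressions in $(X,Y,\lambda)$ whose squares integrate uniformly against the conformal volume form $4(1+r^2)^{-2}\,dX\,dY$ on a compact neighborhood of $\lambda=1$. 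Combined with $|\lambda_\varepsilon-1|\leq C\varepsilon^2$, this gives $\int|D(u_{\lambda_\varepsilon}-\mathrm{id})|^2\leq C\varepsilon^4$, and the triangle inequality closes the argument.

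The only step that goes beyond formal manipulation is the Lipschitz bound for $\lambda\mapsto u_\lambda$ in $H^1$ near $\lambda=1$, but this reduces to the short explicit computation outlined above, so I do not expect any serious obstacle.
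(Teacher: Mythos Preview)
Your proposal is correct and follows essentially the same route as the paper. Both arguments combine Lemma~\ref{lemma: controlofdilationparameterintermsoftau} (giving $v_\varepsilon$ close to some $w_{\lambda_\varepsilon}$ with $|1-\lambda_\varepsilon|\le C\varepsilon^2$) with a Lipschitz-type estimate $\int|D(u_\lambda-\mathrm{id})|^2\le C(\lambda-1)^2$ and the triangle inequality. The only cosmetic difference is that the paper obtains this Lipschitz bound by explicitly factoring $(\lambda-1)$ out of $\tilde u_\lambda-\tilde{\mathrm{id}}$ and invoking conformal invariance of the Dirichlet energy, whereas you appeal to smooth dependence of $\lambda\mapsto u_\lambda$ in $H^1$ near $\lambda=1$; these are two phrasings of the same computation. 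One small remark: to match the full statement you should also track the intermediate bound $C(\|\nabla|u_\varepsilon|\|_2^2+\|\nabla^2|u_\varepsilon|\|_2^2)$ rather than jumping straight to $C\varepsilon^4$, but this comes for free since Lemma~\ref{lemma: controlofdilationparameterintermsoftau} already controls $|1-\lambda_\varepsilon|$ by that quantity.
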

\begin{proof}
    To prove the Corollary it suffices to show that, for any harmonic map $w_\lambda:S^2\to S^2$, up to rotation and complex conjugation,
    \begin{equation}
        \int |D(w_\lambda-\mathrm{id})|^2\leq C|1-\lambda|^2.
    \end{equation}
    We have the explicit formula for the map $\tilde{w}_\lambda:= w_\lambda\circ\Pi:\mathbb{C}\to S^2$ given by
\[
\tilde{w}_\lambda(X,Y)=\left(
  \frac{2\lambda X}{1 + r^2\lambda^2},\;
  \frac{2\lambda Y}{1 + r^2\lambda^2},\;
  \frac{-1 + r^2\lambda^2}{1 + r^2\lambda^2}
\right),\qquad r^{2}=X^{2}+Y^{2}.
\]
Moreover,
\[
\tilde{\mathrm{id}}\,(X,Y)=\left(
  \frac{2 X}{1 + r^2},\;
  \frac{2 Y}{1 + r^2},\;
  \frac{-1 + r^2}{1 + r^2}
\right),\qquad r^{2}=X^{2}+Y^{2}.
\]
and hence their difference is given by
\begin{align}
(\tilde{w}_\lambda-\tilde{\mathrm{id}})(X,Y)&\bigg(\frac{2X(\lambda-1)(1-\lambda r^2)}{(1+\lambda^2 r^2)(1+r^2)},\frac{2Y(\lambda-1)(1-\lambda r^2)}{(1+\lambda^2 r^2)(1+r^2)},\frac{2r^2(\lambda+1)(\lambda-1)}{(1+\lambda^2r^2)(1+r^2)}\bigg)\\=&(\lambda-1)\bigg(\frac{2X(1-\lambda r^2)}{(1+\lambda^2 r^2)(1+r^2)},\frac{2Y(1-\lambda r^2)}{(1+\lambda^2 r^2)(1+r^2)},\frac{2r^2(\lambda+1)}{(1+\lambda^2r^2)(1+r^2)}\bigg)\\
=&(\lambda-1)G(X,Y)
\end{align}
By conformal invariance of the Dirichlet energy,
\begin{equation}
    \int_{S^2}|D(w_\lambda-\mathrm{id})|^2=(\lambda-1)^2\int_{\mathbb{C}}|\nabla G|^2\leq C(\lambda-1)^2.
\end{equation}
This concludes the proof of the Corollary.
\end{proof}
\subsection{Improved closeness to rotations} In this section we show that it is possible to improve the $\dot{H}^1$-estimate of Corollary \ref{corollary: epsfourclosenesstoidentity} to a stronger $W^{1,2}$ estimate for $u_\varepsilon-\mathrm{id}$.
\begin{proposition} \label{proposition: epsilon8closeness}Let $\{u_\varepsilon\}_\varepsilon$ be a $\gamma$-admissible sequence. Then, there exists $\eta> 0$ such that, up to rotations and possible complex conjugations of the $u_\varepsilon$,
    \begin{equation}
        \int_{S^2} |u_\varepsilon-\sqrt{1-2\varepsilon^2\,}\mathrm{id}|^2+\int_{S^2} |D(u_\varepsilon-\sqrt{1-2\varepsilon^2\,}\mathrm{id})|^2\leq C\varepsilon^{8+2\eta}.
    \end{equation}
\end{proposition}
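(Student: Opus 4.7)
\emph{Plan.} I would decompose
\[
u_\varepsilon-\sqrt{1-2\varepsilon^{2}}\,\mathrm{id}=\bigl(|u_\varepsilon|-\sqrt{1-2\varepsilon^{2}}\bigr)v_\varepsilon+\sqrt{1-2\varepsilon^{2}}\,(v_\varepsilon-\mathrm{id}),
\]
and improve each piece separately by bootstrapping between a linear elliptic equation for the modulus $|u_\varepsilon|$ and the phase equation \eqref{eq:phase-laplacian} for $v_\varepsilon$. Corollary \ref{corollary: epsfourclosenesstoidentity} provides the baseline $\|v_\varepsilon-\mathrm{id}\|_{\dot H^{1}}\le C\varepsilon^{2}$; the objective is to upgrade both modulus and phase to order $\varepsilon^{4+\eta}$ in $H^{1}$, which then combines via the decomposition above to the claimed $\varepsilon^{8+2\eta}$-control.

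\emph{Modulus equation.} Taking the inner product of \eqref{eq:Gintroductionele} with $u_\varepsilon$ and using $|\nabla u_\varepsilon|^{2}=|\nabla|u_\varepsilon|\,|^{2}+|u_\varepsilon|^{2}|\nabla v_\varepsilon|^{2}$ yields, for $\tilde\rho_\varepsilon:=(1-|u_\varepsilon|^{2})-2\varepsilon^{2}$, the equation
\[
-\Delta\tilde\rho_\varepsilon+\frac{2|u_\varepsilon|^{2}}{\varepsilon^{2}}\,\tilde\rho_\varepsilon=2|\nabla|u_\varepsilon|\,|^{2}+2|u_\varepsilon|^{2}\bigl(|\nabla v_\varepsilon|^{2}-2\bigr).
\]
Testing against $\tilde\rho_\varepsilon$ and exploiting the coercivity coming from $2|u_\varepsilon|^{2}/\varepsilon^{2}\sim 2/\varepsilon^{2}$ gives $\|\tilde\rho_\varepsilon\|_{L^{2}}\le C\varepsilon^{2}\|\text{RHS}\|_{L^{2}}$. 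Writing $|\nabla v_\varepsilon|^{2}-2=2\nabla\mathrm{id}\cdot\nabla(v_\varepsilon-\mathrm{id})+|\nabla(v_\varepsilon-\mathrm{id})|^{2}$ converts the second factor of the right-hand side into a quantity controlled by $\|v_\varepsilon-\mathrm{id}\|_{\dot H^{1}}$, while $\|\nabla|u_\varepsilon|\,\|_{L^{4}}^{2}$ is $O(\varepsilon^{4})$ by Corollary \ref{corollary: apriorickepsestimate}. Standard elliptic regularity on the same equation then upgrades the $L^{2}$-estimate to $H^{1}$-control of $|u_\varepsilon|-\sqrt{1-2\varepsilon^{2}}$ at the same order.

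\emph{Bootstrap via the phase equation.} Feeding the sharpened bound on $\|\nabla|u_\varepsilon|\,\|_{L^{2}}$ back into the phase relation $\tau(v_\varepsilon)=-2\nabla v_\varepsilon\cdot\nabla\ln|u_\varepsilon|$ yields a strictly smaller $\|\tau(v_\varepsilon)\|_{L^{2}}$ than the $C\varepsilon^{2}$ used in Lemma \ref{lemma: controlofdilationparameterintermsoftau}. Combining Lemma \ref{lem: energy gap tau} with Theorem \ref{thm:moebius-approx} then produces an improved Möbius approximant $w_{\lambda_\varepsilon}$, and the barycenter identity of Lemma \ref{lemma: small veps barycenter for veps} together with Lemma \ref{lemma: dilation barycenter relation} sharpens $|\lambda_\varepsilon-1|$. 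Since the proof of Corollary \ref{corollary: epsfourclosenesstoidentity} shows $\|w_{\lambda_\varepsilon}-\mathrm{id}\|_{\dot H^{1}}=O(|1-\lambda_\varepsilon|)$, the triangle inequality delivers an improved $\|v_\varepsilon-\mathrm{id}\|_{\dot H^{1}}$. Iterating this loop a finite number of times, each pass gaining a fixed multiplicative factor $\varepsilon^{\eta_{0}}$, reaches the target exponent $4+\eta$ in both modulus and phase simultaneously.

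\emph{Main obstacle.} The delicate point is securing a genuine gain at each bootstrap step. The pointwise control $|\tau(v_\varepsilon)|\le 2|\nabla v_\varepsilon|\,|\nabla\ln|u_\varepsilon|\,|$ involves $|\nabla v_\varepsilon|$ only through its $L^{\infty}$-norm, which stays $O(1)$ regardless of how close $v_\varepsilon$ is to $\mathrm{id}$. The improvement therefore cannot be read off pointwise; one must exploit the $L^{2}$-smallness of $|\nabla v_\varepsilon|^{2}-2$ (via the linearisation $2\nabla\mathrm{id}\cdot\nabla(v_\varepsilon-\mathrm{id})+|\nabla(v_\varepsilon-\mathrm{id})|^{2}$) when estimating the right-hand side of the modulus equation, and then track the quantitative consequences through the BMMS and barycenter steps. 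Verifying that these iterated gains do not deteriorate to zero — equivalently, that a universal $\eta>0$ can be extracted — is the technical heart of the argument.
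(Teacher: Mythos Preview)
Your bootstrap stalls at the very place you flag as the ``main obstacle,'' but for a reason you have not identified. The feedback loop you describe runs through Lemma~\ref{lemma: small veps barycenter for veps} and Lemma~\ref{lemma: dilation barycenter relation} to pin down $|\lambda_\varepsilon-1|$, and those lemmas bound the quartic barycenter of $v_\varepsilon$ by $\|\nabla|u_\varepsilon|\|_{2}+\|\nabla^{2}|u_\varepsilon|\|_{2}$, not by $\|\nabla|u_\varepsilon|\|_{2}$ alone (the $\nabla^{2}|u_\varepsilon|$ enters because $|\nabla v_\varepsilon|^{2}-(1-|u_\varepsilon|^{2})/\varepsilon^{2}$ is governed by $\Delta|u_\varepsilon|^{2}$). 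Your coercivity argument for $\tilde\rho_\varepsilon$ gives $\|\tilde\rho_\varepsilon\|_{2}\le C\varepsilon^{2}\|F\|_{2}$ and $\|\nabla\tilde\rho_\varepsilon\|_{2}\le C\varepsilon\|F\|_{2}$, but only $\|\nabla^{2}\tilde\rho_\varepsilon\|_{2}\le C\|F\|_{2}$ with \emph{no} extra power of $\varepsilon$ (the claim that elliptic regularity gives $H^{1}$ ``at the same order'' as $L^{2}$ is already too optimistic by one power). Hence after one pass you get $\|\nabla|u_\varepsilon|\|_{2}\le C\varepsilon^{3}$ but still $\|\nabla^{2}|u_\varepsilon|\|_{2}\le C\varepsilon^{2}$, so $|\lambda_\varepsilon-1|\le C\varepsilon^{2}$ is unchanged and Corollary~\ref{corollary: epsfourclosenesstoidentity} returns the same $\|\nabla h_\varepsilon\|_{2}\le C\varepsilon^{2}$ you started with. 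The iteration does not gain.

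The paper closes the loop differently. It first obtains an $H^{2}$ bound for $h_\varepsilon=v_\varepsilon-\mathrm{id}$ from the phase equation, and then exploits a structural fact you have not used: since $|\nabla\mathrm{id}|^{2}\equiv2$ is \emph{constant}, one has $\nabla^{2}\mathrm{id}\cdot\nabla\mathrm{id}=0$, so
\[
\bigl\|\nabla|\nabla v_\varepsilon|^{2}\bigr\|_{2}
\le C\bigl(\|\nabla^{2}h_\varepsilon\|_{2}+\|\nabla h_\varepsilon\|_{2}\bigr).
\]
Feeding this into the \emph{differentiated} modulus identity $\tfrac{1-|u_\varepsilon|^{2}}{\varepsilon^{2}}=\tfrac{|\nabla u_\varepsilon|^{2}}{|u_\varepsilon|^{2}}+\tfrac{\Delta|u_\varepsilon|^{2}}{2|u_\varepsilon|^{2}}$ yields the self-referential inequality
\(
\|\nabla|u_\varepsilon|\|_{2}^{2}\le C\varepsilon^{4}\bigl(\|\nabla|u_\varepsilon|\|_{2}^{2}+\|\nabla^{2}|u_\varepsilon|\|_{2}^{2}+\|\nabla^{3}|u_\varepsilon|\|_{2}^{2}\bigr),
\)
which is then closed in a \emph{single} step by Gagliardo--Nirenberg interpolation against the a~priori $C^{k}$ bounds $\|\nabla^{k}|u_\varepsilon|\|_{\infty}\le C_{k}\varepsilon^{2}$, producing $\|\nabla|u_\varepsilon|\|_{2}^{2}\le C\varepsilon^{N}$ for $N$ as large as one likes. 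A further ingredient you do not mention is needed to pass from $\dot H^{1}$ to full $H^{1}$ for $v_\varepsilon-\mathrm{id}$: the paper uses $\int_{S^{2}}(1-|u_\varepsilon|^{2})u_\varepsilon=0$ to control $\bar u_\varepsilon$ and then Poincar\'e.
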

\begin{proof}
    By Corollary \ref{corollary: epsfourclosenesstoidentity}, denoting by $h_\varepsilon:= v_\varepsilon-\mathrm{id}$,
    \begin{align}
       || -\Delta h_\varepsilon||_2^2&=\int -\Delta h_\varepsilon \big(|\nabla v_\varepsilon|^2v_\varepsilon-2\mathrm{id}+2\nabla v_\varepsilon\nabla \ln|u_\varepsilon|\big)\\&=\int -\Delta h_\varepsilon\big(2(v_\varepsilon-\mathrm{id})+v_\varepsilon(\nabla v_\varepsilon-\nabla \mathrm{id})(\nabla v_\varepsilon+\nabla \mathrm{id})+2\nabla v_\varepsilon\nabla \ln|u_\varepsilon|\big)\\
       &= \int -\Delta h_\varepsilon\big(2h_\varepsilon+v_\varepsilon\nabla h_\varepsilon(\nabla v_\varepsilon+\nabla \mathrm{id})+2\nabla v_\varepsilon\nabla \ln|u_\varepsilon|\big)\\
       &\leq \int 2|\nabla h_\varepsilon|^2+ |\nabla h_\varepsilon|^2|\nabla (v_\varepsilon(\nabla(v_\varepsilon+\mathrm{id})))|+2|\nabla h_\varepsilon||\nabla(\nabla v_\varepsilon\cdot\nabla\ln|u_\varepsilon|)|\\
       &\quad\quad+\frac{\sigma}{2}\int|\Delta h_\varepsilon|^2+\frac{1}{2\sigma}\int |\nabla h_\varepsilon|^2|\nabla(v_\varepsilon+\mathrm{id})|^2
    \end{align}
    which implies together with the a priori estimates for $|\nabla^k|u_\varepsilon||\leq C_k\varepsilon^2$ from Corollary \ref{corollary: apriorickepsestimate}
    \begin{equation}\label{equation:k1}
        ||\Delta h_\varepsilon||_2^2\leq C(||\nabla h_\varepsilon||_2^2+||\nabla |u_\varepsilon|||_2^2+||\nabla^2 |u_\varepsilon|||_2^2)\leq C\varepsilon^4
    \end{equation}
    By standard elliptic regularity estimates,
    \begin{equation}\label{equation: k2}
        ||\nabla^2h_\varepsilon||_{2}^2\leq C\varepsilon^4.
    \end{equation}
    Now we can use estimate \eqref{equation: k2} to get
    \begin{align}\label{equation: k3}
        \int |\nabla|\nabla v_\varepsilon|^2|^2&=\int |\nabla^2v_\varepsilon\cdot\nabla v_\varepsilon|^2\leq C\int |(\nabla^2v_\varepsilon-\nabla^2\mathrm{id})\nabla v_\varepsilon|^2+|\nabla^2\mathrm{id}\cdot\nabla v_\varepsilon|^2\\&= C\int |(\nabla^2v_\varepsilon-\nabla^2\mathrm{id})\cdot\nabla v_\varepsilon|^2+|\nabla^2\mathrm{id}\cdot(\nabla v_\varepsilon-\nabla\mathrm{id})|^2\\
        &\leq C||\nabla^2h||_2^2+C||\nabla h||_2^2\\
        &\leq C\varepsilon^4.
    \end{align}
Using $|\nabla v_\varepsilon|^2=\frac{|\nabla u_\varepsilon|^2-|\nabla|u_\varepsilon||^2}{|u_\varepsilon|^2}$ \eqref{equation: formulanablaveps},
\begin{equation}\label{equation: k4}
    \int \bigg\vert\,\nabla\bigg(\frac{|\nabla u_\varepsilon|^2}{|u_\varepsilon|^2}\bigg)\bigg\vert\,^2 \leq C\int |\nabla|\nabla v_\varepsilon|^2|^2+C\int |\nabla^2 |u_\varepsilon||^2+C\int |\nabla|u_\varepsilon||^2+C\int |\nabla|u_\varepsilon||^4\leq C\varepsilon^4
\end{equation}
Finally, we have the equation
\begin{equation}
   -\Delta|u_\varepsilon|^2\frac{1}{2|u_\varepsilon|^2}=\frac{1-|u_\varepsilon|^2}{\varepsilon^2}-\frac{|\nabla u_\varepsilon|^2}{|u_\varepsilon|^2}
\end{equation}
 and by elliptic regularity again
\begin{align}\label{equation: improvedboundsnablaabsu}
    \int \bigg\vert\,\nabla\bigg(\frac{1-|u_\varepsilon|^2}{\varepsilon^2}\bigg)\bigg\vert\,^2& \leq C\int |\nabla^3|u_\varepsilon||^2+C\int |\nabla^2|u_\varepsilon||^2+C\int |\nabla|u_\varepsilon||^2+C\int \frac{|\nabla|\nabla u_\varepsilon||^2}{|u_\varepsilon|^2}\\&\leq C\varepsilon^4
\end{align}
and hence 
\begin{align}
    ||\nabla |u_\varepsilon|\,||_2^2&\leq C\varepsilon^4\,\big(\,||\nabla^3|u_\varepsilon|\,||_2^2\, +\,||\nabla^2|u_\varepsilon|\,||_2^2\,+\,||\nabla|u_\varepsilon|\,||_2^2\,+\,||\nabla^2h_\varepsilon\,||_2^2\,+\,||\nabla h_\varepsilon\,||_2^2\,\big)\\
    &\overset{\eqref{equation:k1}}{\leq} C\varepsilon^4\,\big(\,||\nabla^3|u_\varepsilon|\,||_2^2\, +\,||\nabla^2|u_\varepsilon|\,||_2^2\,+\,||\nabla|u_\varepsilon|\,||_2^2\,\big)\\
    &\leq C_\theta\,\varepsilon^4\,\big(\,||\nabla|u_\varepsilon|\,||_2^{2(1-\theta)}||\nabla^{m_\theta}|u_\varepsilon|||_2^{2\theta}\, +\,||\nabla|u_\varepsilon|\,||_2^{2(1-\theta)}||\nabla^{n_\theta}|u_\varepsilon|||_2^{2\theta}\,+\,||\nabla|u_\varepsilon|\,||_2^2\,\big)\\
    &\leq C_\theta\,\varepsilon^{4+2\theta}||\nabla|u_\varepsilon|\,||_2^{2(1-\theta)},
\end{align}
where we used the a priori bounds coming from strong convergence (Lemma \ref{lemma: strong convergence of admissible seq}) $|\nabla^K|u_\varepsilon||\leq C_K\varepsilon^2$. Dividing both sides by $||\nabla|u_\varepsilon|\,||_2^{2(1-\theta)}$,
\begin{equation}
    ||\nabla|u_\varepsilon|\,||_2^{2}\leq C_\theta^{1/\theta}\,\varepsilon^{\frac{4+2\theta}{\theta}}
\end{equation}
Choosing $\theta$ small enough, and by interpolation again,
\begin{equation}\label{equation:estimateeps24}
   ||\nabla ^2|u_\varepsilon|\,||_2^2+||\tau(v_\varepsilon)||_2^2\leq C (||\nabla^2 |u_\varepsilon|\,||_2^2+||\nabla |u_\varepsilon|\,||_2^2)\leq C\varepsilon^{24}
\end{equation}
and by Corollary \ref{corollary: epsfourclosenesstoidentity},
\begin{equation}
    \int |\nabla(v_\varepsilon-\mathrm{id})|^2\leq C\varepsilon^{24}
\end{equation}
and we can use this to estimate
\begin{align}
    |\bar{u}_\varepsilon|:=\bigg\vert\int u_\varepsilon\bigg\vert &=\bigg\vert \frac{1}{\overline{1-|u_\varepsilon|^2}}\int (1-|u_\varepsilon|^2-\overline{1-|u_\varepsilon|^2})\,u_\varepsilon\,\bigg\vert\\&\leq  \frac{1}{\overline{1-|u_\varepsilon|^2}}\bigg(\int \Big(1-|u_\varepsilon|^2-\overline{1-|u_\varepsilon|^2}\Big)^2\bigg)^{1/2}\\
    &\leq   \frac{1}{\overline{1-|u_\varepsilon|^2}}\bigg(\int |\nabla (1-|u_\varepsilon|^2)|^2\bigg)^{1/2}\\
    &\leq \frac{C}{\varepsilon^2}\varepsilon^{12}=C\varepsilon^{10}.
\end{align}
With this barycenter estimate and \eqref{equation:estimateeps24}, we get
\begin{equation}
   \bigg| \int_{S^2}\frac{u_\varepsilon}{|u_\varepsilon|}-\overline{|u_\varepsilon|^{-1}}u_\varepsilon\,\bigg|\leq C\, \bigg\vert \int_{S^2}|\nabla|u_\varepsilon|^{-1}|^2\bigg\vert^{1/2}\leq C\varepsilon^{12}\implies \bigg\vert\int_{S^2}v_\varepsilon\,\bigg\vert\leq C\varepsilon^{24}
\end{equation}
\begin{equation}
    \int |v_\varepsilon-\mathrm{id}|^2\leq C\int |u_\varepsilon-\mathrm{id}-\bar{v}_\varepsilon|^2+|\bar{v}_\varepsilon|^2\leq C\varepsilon^{20}+\int |\nabla (v_\varepsilon-\mathrm{id})|^2\leq C\varepsilon^{20}.
\end{equation}
To conclude notice the following:
\begin{equation}
    v_\varepsilon-\mathrm{id}=\frac{1}{|u_\varepsilon|}(u_\varepsilon-|u_\varepsilon|\,\mathrm{id}) =\frac{1}{|u_\varepsilon|}(u_\varepsilon-\overline{|u_\varepsilon|}\,\mathrm{id}) +\frac{1}{|u_\varepsilon|}(\overline{|u_\varepsilon|}\,\mathrm{id}-|u_\varepsilon|\,\mathrm{id})
\end{equation}
which implies
\begin{equation}
    (u_\varepsilon-\overline{|u_\varepsilon|}\,\mathrm{id}) = |u_\varepsilon|(v_\varepsilon-\mathrm{id}) - (\overline{|u_\varepsilon|}-|u_\varepsilon|\,)\mathrm{id}
\end{equation}
and hence by Poincaré's inequality and the previous bounds,
\begin{equation}\label{w2}
    \int_{S^2}| u_\varepsilon-\overline{|u_\varepsilon|}\,\mathrm{id}\,|^2+ \int_{S^2}|\nabla( u_\varepsilon-\overline{|u_\varepsilon|}\,\mathrm{id})\,|^2\leq C\varepsilon^{20}.
\end{equation}
Finally it remains to understand
\begin{equation}
    \int\frac{1-|u_\varepsilon|^2}{\varepsilon^2}=\int -\Delta u_\varepsilon\,\frac{u_\varepsilon}{|u_\varepsilon|^2}=\int \frac{|\nabla u_\varepsilon|^2}{|u_\varepsilon|^2} +O(\varepsilon^{24})=\int |\nabla v_\varepsilon|^2+ O(\varepsilon^{24}).
\end{equation}
Moreover,
\begin{equation}
  \bigg\vert  \int |\nabla v_\varepsilon|^2-2\,\bigg\vert =\bigg\vert \int (\nabla v_\varepsilon-\nabla\mathrm{id})\,\cdot\, (\nabla v_\varepsilon+\nabla\mathrm{id})\,\bigg\vert \leq C\varepsilon^{12}
\end{equation}
so that
\begin{equation}\label{w1}
    \bigg\vert \int\frac{1-|u_\varepsilon|^2}{\varepsilon^2}-2\,\bigg\vert\leq C\varepsilon^{12}\implies \overline{|u_\varepsilon|^2} = 1-2\varepsilon^2+O(\varepsilon^{12})
\end{equation}
and finally,
\begin{equation}
    \bigg\vert\int |u_\varepsilon|^2-\overline{|u_\varepsilon|}\,^2\bigg\vert= \bigg\vert\int (|u_\varepsilon|-\overline{|u_\varepsilon|}\,)\,(|u_\varepsilon|+\overline{|u_\varepsilon|}\,)\bigg\vert\leq C\, \bigg\vert \int |\nabla|u_\varepsilon||^2\,\bigg\vert^{1/2}\leq C\varepsilon^{12}
\end{equation}
which combined with \eqref{w1} and \eqref{w2} implies
\begin{align}
       &\int_{S^2}| u_\varepsilon-\sqrt{1-2\varepsilon^2}\,\mathrm{id}\,|^2+ \int_{S^2}|\nabla( u_\varepsilon-\sqrt{1-2\varepsilon^2}\,\mathrm{id})\,|^2 \\=\,&\int_{S^2}| u_\varepsilon-\overline{|u_\varepsilon|}\,\mathrm{id}\,|^2+ \int_{S^2}|\nabla( u_\varepsilon-\overline{|u_\varepsilon|}\,\mathrm{id})\,|^2 + O(\varepsilon^{12})\\\leq&\,\, C\varepsilon^{12}.
\end{align}
This concludes the proof of the theorem with $\eta=2$.

\end{proof}

\subsection{Vectorial spherical harmonics}
Denote by $Y_{lm}$ the usual spherical harmonics satisfying
\begin{equation}
    \begin{cases}
        -\Delta Y_{lm} = l(l+1)Y_{lm}\\
        \int_{S^2}Y_{lm}Y_{l'm'}=\delta_l^{l'}\delta_m^{m'}. 
    \end{cases}
\end{equation}
Define the $\mathbb{R}^3$ valued functions 
\[
N_{lm}=Y_{lm}x,\quad \Psi_{lm}=\sqrt{l(l+1)}^{-1}\,\nabla Y_{lm},\quad \Phi_{lm}=\sqrt{l(l+1)}^{-1}\,x\times \nabla Y_{lm}.
\]
They satisfy
\begin{equation}\label{equation: propertiesofvsh}
    \begin{cases}
    \int_{S^2}N_{lm}N_{l'm'}=\int_{S^2}\Psi_{lm}\Psi_{l'm'}=\int_{S^2}\Phi_{lm}\Phi_{l'm'}=\delta_l^{l'}\delta_m^{m'}\\
        \int_{S^2}N_{lm}\Phi_{l'm'}=\int_{S^2}N_{lm}\Psi_{l'm'}=\int_{S^2}\Psi_{lm}\Phi_{l'm'}=0\\
        \forall l\geq 0,\,\int_{S^2}\Phi_{lm}=0, \forall l\neq 1 \int N_{lm}=\int \Psi_{lm}=0\\
        -\Delta N_{lm}=(2+l(l+1))N_{lm}-2 \nabla x\cdot\nabla Y_{lm}=(2+l(l+1))N_{lm}-2 \sqrt{l(l+1)}\Psi_{lm}\\
        -\Delta \Psi_{lm}= l(l+1)\Psi_{lm}-2 \sqrt{l(l+1)}N_{lm}\\
        -\Delta \Phi_{lm}= l(l+1)\Phi_{lm}
    \end{cases}
\end{equation}
Any function $N_1$ in the span of $\{N_{1m}\vert -1\leq m\leq 1\}$ can be written as $N_1(a):=\langle a\cdot x\rangle x$ and any function $\Psi_1$ in the span of $\{\Psi_{1m}\vert -1\leq m\leq 1\}$ can be written as $\Psi_1(b):=\nabla (b\cdot x)=b-\langle b,x\rangle x$ for vectors $a,b\in\mathbb{R}^3$. Using $\int_{S^2}x_ix_j=\frac{4\pi}{3}\delta_{ij}$ one shows
\begin{equation}\label{equation: vshpropertiesfirstmodes}
   \int N_1(a)=\frac{4\pi}{3}a\quad \int |N_1(a)|^2=\frac{4\pi}{3}|a|^2\quad \int \Psi_1(b)=\frac{8\pi}{3}b\quad \int |\Psi_1(b)|^2=\frac{8\pi}{3}|b|^2.
\end{equation}
\begin{equation}
    \int \nabla N_1(a)\cdot\nabla\Psi_1(b)=-\frac{16\pi}{3}a\cdot b
\end{equation}
\subsection{Second variation at rotations} Using vectorial spherical harmonics, we compute the second variation at the critical point $u_0:=\sqrt{1-2\varepsilon^2}\,x$. For a variation  $w$,
\begin{equation}
    D^2E_\varepsilon(u_0)[w,w]=\int_{S^2}|\nabla w|^2-2|w|^2+\frac{2}{\varepsilon^2}(u\cdot w)^2,
\end{equation}
and decomposing
\begin{equation}
    w = a_0x+N_1(a) + \Psi_1(b) +\sum\limits_{l\geq 2}\sum\limits_{|m|\leq l} a_{lm}N_{lm}+ b_{lm}\Psi_{lm}+c_{lm}\Phi_{lm},
\end{equation}
by the properties of vectorial spherical harmonics \eqref{equation: propertiesofvsh},
\begin{align}
    D^2E_\varepsilon(u_0)[w,w]&=\sum\limits_{l\geq 2}\sum\limits_{|m|\leq l}(l(l+1)+\frac{2(1-2\varepsilon^2)}{\varepsilon^2})a_{lm}^2+(l(l+1)-2)(b_{lm}^2+c_{lm}^2)\\&\,\,\,+4\sqrt{l(l+1)}a_{lm}b_{lm}+(2+\frac{2(1-2\varepsilon^2)}{\varepsilon^2})\frac{4\pi}{3}|a|^2-\frac{32\pi}{3}a\cdot b+(\frac{2(1-2\varepsilon^2)}{\varepsilon^2}-2)4\pi|a_0|^2
\end{align}
Notice that $l(l+1)-2-2\sqrt{l(l+1)}\geq \frac{1}{4} (l(l+1))$ for all $l\geq 3$ so that
\begin{align}\label{equation: secondvariationgeneralcoefficients}
    &D^2E_\varepsilon(u_0)[w,w]\\&\geq\sum\limits_{l\geq 3}\sum\limits_{|m|\leq l}(l(l+1)+\frac{2(1-2\varepsilon^2)}{\varepsilon^2}-2\sqrt{l(l+1)})a_{lm}^2+(l(l+1)-2-2\sqrt{l(l+1)})b_{lm}^2\\&+(l(l+1)-2)c_{lm}^2\,\,\,+(2+\frac{2(1-2\varepsilon^2)}{\varepsilon^2})\frac{4\pi}{3}|a|^2-\frac{32\pi}{3}a\cdot b+ |a_0|^2(l=2\,\,\text{modes})\\
    &\geq\frac{1}{4}\sum\limits_{l\geq 3}(l(l+1))(a_{lm}^2+b_{lm}^2+c_{lm}^2)+
    (6+\frac{2(1-2\varepsilon^2)}{\varepsilon^2}-4\sqrt{6})a_{2m}^2\\&+(6-2-\sqrt{6})b_{2m}^2+4c_{2m}^2
    \,\,\,+(2+\frac{2(1-2\varepsilon^2)}{\varepsilon^2})\frac{4\pi}{3}|a|^2-\frac{32\pi}{3}a\cdot b+|a_0|^2\\
    &\geq \frac{1}{4}\sum\limits_{l\geq 2}(l(l+1))(a_{lm}^2+b_{lm}^2+c_{lm}^2)+(2+\frac{2(1-2\varepsilon^2)}{\varepsilon^2})\frac{4\pi}{3}|a|^2-\frac{32\pi}{3}a\cdot b+|a_0|^2,
\end{align}
where the last step holds for all sufficiently small $\varepsilon>0$. Notice that for purely first tangential modes $w$, for instance $w=x$, the second variation vanishes and we certainly cannot show that the remaining terms above $(2+\frac{2(1-2\varepsilon^2)}{\varepsilon^2})\frac{4\pi}{3}|a|^2-\frac{32\pi}{3}a\cdot b$ are positive in general.
\subsection{The first mode of the difference of two solutions}\label{section: thefirstmodeoftheddifference} When $w=u-u_0$ is the difference of a critical point $u$ of $E_\varepsilon$ and $u_0=\sqrt{1-2\varepsilon^2}\,x$, the $l=1$ component of $w$ has some additional properties.

\begin{lemma}\label{lemma: usefulintegralsofw}
    For $w=u-u_0$ as above, $w=N_1(a)+\Psi_1(b)+w_\perp$, and $w_\perp$ has no $l=1$ component,
    \begin{equation}\label{k}
        \int |w|^2x\leq C|a|\,||w||_2+C||w_\perp||_2||w||_2
    \end{equation}
        \begin{equation}\label{kk}
        \int |w|^4x\leq C||w_\perp||_2||w||_{H^1}^3+C|a|\,||w||_{H_1}^3
    \end{equation}
\end{lemma}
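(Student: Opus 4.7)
The two inequalities are really structural identities about vector fields on $S^{2}$ with the prescribed spherical-harmonic decomposition; the critical-point equation only enters implicitly through the form of the decomposition. The three ingredients I would use are: (a) the pointwise orthogonality $N_{1}(a)\cdot\Psi_{1}(b)=\langle a,x\rangle x\cdot(b-\langle b,x\rangle x)=0$, since $N_{1}$ is radial and $\Psi_{1}$ tangential; (b) the $L^{2}$-orthogonality of the decomposition, which gives $|a|,|b|\le C\|w\|_{L^{2}}$ via \eqref{equation: vshpropertiesfirstmodes}; (c) antipodal parity on $S^{2}$, which annihilates integrals of odd-degree polynomials in $x$.

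For \eqref{k}, I would expand
\[
|w|^{2}=\underbrace{|N_{1}(a)|^{2}+|\Psi_{1}(b)|^{2}}_{P(x)}+|w_{\perp}|^{2}+2\bigl(N_{1}(a)+\Psi_{1}(b)\bigr)\cdot w_{\perp},
\]
using (a) to drop the cross term between $N_{1}$ and $\Psi_{1}$. The pure $l=1$ piece $P(x)=\langle a,x\rangle^{2}+|b|^{2}-\langle b,x\rangle^{2}$ is a polynomial of degree at most two in $x$, so $P(x)\,x$ is odd and $\int_{S^{2}}P(x)\,x=0$ by (c). The remaining three contributions are bounded by Cauchy--Schwarz using $\|N_{1}(a)\|_{\infty}\le|a|$ and $\|\Psi_{1}(b)\|_{\infty}\le 2|b|$; the spurious $|b|\|w_{\perp}\|_{2}$ factor is absorbed via $|b|\le C\|w\|_{2}$ from (b), yielding \eqref{k}.

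For \eqref{kk} I would square the same decomposition. The pure $l=1$ part $P(x)^{2}$ is a polynomial of even degree at most four in $x$, hence $P(x)^{2}\,x$ is odd and integrates to zero. All remaining contributions contain at least one factor of $w_{\perp}$. The delicate term is $\int|w_{\perp}|^{4}x$, which I would bound using the two-dimensional Ladyzhenskaya inequality
\[
\|w_{\perp}\|_{L^{4}(S^{2})}^{4}\le C\,\|w_{\perp}\|_{L^{2}}^{2}\,\|w_{\perp}\|_{H^{1}}^{2},
\]
so that exactly one factor $\|w_{\perp}\|_{L^{2}}$ is pulled out. Cubic terms $|w_{\perp}|^{3}(|a|+|b|)$ are handled by the Hölder interpolation $\|w_{\perp}\|_{L^{3}}^{3}\le\|w_{\perp}\|_{L^{2}}\|w_{\perp}\|_{L^{4}}^{2}$, and quadratic terms $(|a|^{2}+|b|^{2})|w_{\perp}|^{2}$ are estimated directly. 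Replacing every $|b|$ by $C\|w\|_{L^{2}}\le C\|w\|_{H^{1}}$ and bounding $\|w_{\perp}\|_{L^{2}}\le\|w\|_{H^{1}}$ in the remaining factors delivers \eqref{kk}.

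The main obstacle is the parity cancellation of the pure $l=1$ contribution: without it an uncontrolled $|b|^{2}$ term would enter the right-hand side of \eqref{k} and a $|b|^{2}\|w\|_{H^{1}}^{2}$ term would appear in \eqref{kk}, neither of which is allowed by the stated bounds. The rest is essentially bookkeeping, with the only subtlety in \eqref{kk} being the necessity of invoking Ladyzhenskaya (rather than the plain Sobolev embedding $W^{1,2}\hookrightarrow L^{4}$) so that a single $\|w_{\perp}\|_{L^{2}}$, rather than $\|w_{\perp}\|_{H^{1}}$, survives on the right-hand side.
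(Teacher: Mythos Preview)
Your argument is correct and rests on the same key cancellation as the paper's: both proofs hinge on the vanishing of $\int_{S^2}|\Psi_1(b)|^{2k}x$ (which you obtain via antipodal parity, and the paper via the explicit identities $\int x_ix_j^2=\int x_i^2x_j^2x_k=0$), after which every surviving term carries at least one factor of $N_1(a)$ or $w_\perp$ and is controlled by H\"older together with $|a|,|b|\le C\|w\|_2$. The paper's proof is in fact terser---it only records the vanishing and declares ``it suffices''---so your explicit treatment of the $|w_\perp|^4$ term via the Ladyzhenskaya inequality fills in a detail the paper leaves implicit.
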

\begin{proof}
   It suffices to show that 
   \begin{equation}\label{kkk}
       \int |\Psi_1(b)|^2\,x\,=\,\int |\Psi_1(b)|^4\,x=0.
   \end{equation}
   In order to do so, notice that $|\Psi_1(b)|^2= |b|^2-b_i^2x_i^2$ and hence \eqref{kkk} follows from $\int x_ix_j^2=\int x_i^2x_j^2x_k=0$. 
\end{proof}
The next Lemma shows that the coefficient $a$ is controlled by $w_\perp$:
\begin{lemma}\label{lemma:controlofa}
    In the setting $w=N_1(a)+\Psi_1(b)+w_\perp$, $ w=u-u_0$, $||w||_{H^1}\leq C\varepsilon^{4+\eta}$, and $w_\perp$ has no $l=1$ component,
    \begin{equation}
        |a|\leq \frac{C}{\varepsilon^2}||w_\perp||_2\,||w||_2
    \end{equation}
\end{lemma}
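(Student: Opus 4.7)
The plan is to invoke the balancing identity $\int_{S^2}(1-|u|^2)^2\,x = 0$ (equation \eqref{equation:balancingcondition}) from the proof of Lemma \ref{lemma: strong convergence of admissible seq}, which holds for every critical point $u$ of $E_\varepsilon$ by conformal invariance of the Dirichlet energy. Substituting $u=u_0+w$ with $u_0=\sqrt{1-2\varepsilon^2}\,x$ gives
\[
1-|u|^2 \;=\; 2\varepsilon^2 \;-\; 2\sqrt{1-2\varepsilon^2}\,(x\cdot w) \;-\; |w|^2,
\]
and squaring and integrating against $x$ produces, after using $\int_{S^2} x = 0$ and the identity $\int_{S^2}(x\cdot w)\,x = \tfrac{4\pi}{3}\,a$ (which follows because $x\cdot\Psi_1(b)=0$ pointwise and $w_\perp$ has no $l=1$ component),
\[
\tfrac{32\pi}{3}\,\varepsilon^2\sqrt{1-2\varepsilon^2}\,a \;=\; -4\varepsilon^2\,I_1 \;+\; 4(1-2\varepsilon^2)\,I_2 \;+\; 4\sqrt{1-2\varepsilon^2}\,I_3 \;+\; I_4,
\]
where $I_1=\int|w|^2\,x$, $I_2=\int(x\cdot w)^2\,x$, $I_3=\int(x\cdot w)|w|^2\,x$ and $I_4=\int|w|^4\,x$. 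The $\varepsilon^2$ on the left is what will turn an $O(\|w_\perp\|_2\|w\|_2)$ right-hand side into the desired $\frac{1}{\varepsilon^2}\|w_\perp\|_2\|w\|_2$ bound on $|a|$.

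I would then estimate each $I_k$ by writing $w=p+w_\perp$ with $p:=N_1(a)+\Psi_1(b)$, and observing that in each $I_k$ the pure contribution depending only on $p$ either vanishes by parity or is proportional to $a$. Indeed, $\int|p|^2\,x$, $\int(a\cdot x)^2\,x$ and $\int|p|^4\,x$ all vanish because the integrands are odd polynomials in $x$, whereas $\int(x\cdot p)|p|^2\,x=\int(a\cdot x)[(a\cdot x)^2+|\Psi_1(b)|^2]\,x$ is linear in $a$ with magnitude at most $C|a|\,\|w\|_2^2$. Every remaining term in each $I_k$ then carries an explicit factor of $w_\perp$, so Cauchy--Schwarz together with the uniform pointwise bounds $\|p\|_\infty\leq C\|w\|_2$ and $\|w_\perp\|_\infty\leq C$ (the latter from Corollary \ref{corollary: strong convergence to rotation}) and Lemma \ref{lemma: usefulintegralsofw} for $I_1$ yields
\begin{align*}
|I_1|&\leq C(|a|+\|w_\perp\|_2)\,\|w\|_2, & |I_2|&\leq C|a|\,\|w_\perp\|_2+C\|w_\perp\|_2^{2},\\
|I_3|&\leq C|a|\,\|w\|_2^{2}+C\|w_\perp\|_2\,\|w\|_2^{2}, & |I_4|&\leq C\|w_\perp\|_2^{2}+C\|w\|_2^{3}\,\|w_\perp\|_2.
\end{align*}

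To conclude, I plug these estimates back into the balancing identity and divide by $\varepsilon^2$. The $|a|$-proportional terms on the right carry coefficients of size $\|w\|_2$, $\|w_\perp\|_2/\varepsilon^2$ or $\|w\|_2^2/\varepsilon^2$, all $o(1)$ by $\|w\|_{H^1}\leq C\varepsilon^{4+\eta}$, hence absorbable into $|a|$ on the left. The remaining terms $\|w_\perp\|_2^{2}/\varepsilon^2$, $\|w_\perp\|_2\|w\|_2/\varepsilon^2$, $\|w_\perp\|_2\|w\|_2^{2}/\varepsilon^2$ and $\|w\|_2^{3}\|w_\perp\|_2/\varepsilon^2$ are each dominated by $\tfrac{C}{\varepsilon^2}\|w_\perp\|_2\|w\|_2$, using $\|w_\perp\|_2\leq\|w\|_2$ for the first and the smallness $\|w\|_2=o(1)$ for the rest. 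The main obstacle is the sharp bound on $I_4$: Lemma \ref{lemma: usefulintegralsofw} only gives $|I_4|\leq C(|a|+\|w_\perp\|_2)\|w\|_{H^1}^3$, which is not sharp enough when $\|w\|_{H^1}$ is significantly larger than $\|w\|_2$; the improved estimate $|I_4|\leq C\|w_\perp\|_2^{2}+C\|w\|_2^{3}\|w_\perp\|_2$ must be extracted directly from the expansion $|w|^4=(|p|^2+2p\cdot w_\perp+|w_\perp|^2)^2$, combining the parity vanishing of the purely $p$-dependent piece with the uniform pointwise control $\|w_\perp\|_\infty\leq C$.
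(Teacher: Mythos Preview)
Your approach is correct and essentially identical to the paper's: both expand the balancing identity \eqref{equation:balancingcondition} with $u=u_0+w$, isolate the leading term $-\tfrac{32\pi}{3}\varepsilon^{2}\sqrt{1-2\varepsilon^{2}}\,a$, and bound the four remaining integrals $I_1,\dots,I_4$ via parity and Lemma~\ref{lemma: usefulintegralsofw}, then absorb the $|a|$-proportional pieces using $\|w\|_{H^1}\le C\varepsilon^{4+\eta}$. Your direct expansion of $I_4$ to obtain a purely $L^{2}$ bound is a mild sharpening over the paper, which simply quotes the $H^{1}$ estimate of Lemma~\ref{lemma: usefulintegralsofw}; the difference is immaterial for the downstream use of the lemma, where $H^{1}$ norms appear anyway.
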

\begin{proof}
The balancing condition \eqref{equation:balancingcondition} gives
\begin{align}
   0= &\int_{S^2}\big(1-|u|^2\big)^2x =\int_{S^2}\big((1-|u_0|^2)-(2u_0\cdot\, w+|w|^2)\big)^2x\\=&\int_{S^2}\big(1-|u_0|^2\big)^2x-2\big(1-|u_0|^2\big)\cdot\,\big(2u_0\cdot\, w+|w|^2\big)x+
   \big(2u_0\cdot\, w+|w|^2\big)^2x\\=
   & \int_{S^2}-4(u_0\cdot\,w)\big(1-|u_0|^2\big)x-2|w|^2\big(1-|u_0|^2\big)x+|w|^4x+
   4|w|^2(u_0\cdot\, w)x+4(u_0\cdot\,w)^2x\\=&
   \int_{S^2}-8\varepsilon^2\sqrt{1-2\varepsilon^2}(w\cdot\,x)x-4\varepsilon^2|w|^2x+|w|^4x+4\sqrt{1-2\varepsilon^2}|w|^2(w\cdot\, x)x+4(1-2\varepsilon^2)(w\cdot\, x)^2x
\end{align}
For $w=N_1(a)+\Psi_1(b)+w_\perp$,
\begin{equation}\label{s}
     \int_{S^2}-8\,\varepsilon^2\,\sqrt{1-2\varepsilon^2}\,(w\,\cdot\,x)\,x\,=\,-\,\sqrt{1-2\varepsilon^2}\,\frac{32\pi\varepsilon^2}{3}\,a
\end{equation}
\begin{align}\label{ss}
     \Big|\,4\,(1-2\varepsilon^2)\,\int_{S^2}(w\cdot x)^2\,x\,\Big|\,= & \,C\,\big(\,|a|^2\,+\,||w_\perp||_2^2\,\big).
\end{align}
By Lemma \ref{lemma: usefulintegralsofw},
\begin{equation}\label{sss}
\Big|\,\int_{S^2}\,4\,\varepsilon^2\,|w|^2\,x\,\Big|\,\leq  C\varepsilon^2|a|^2+C\varepsilon^2||w_\perp||_2||w||_2
\end{equation}
and 
\begin{equation}\label{ssss}
\Big\vert\int_{S^2}\,|w|^4\,x\Big\vert\leq \,C\,|a|\,||w||_{H^1}^3\,+\,C\,||w||_{H^1}^3\,||w_\perp||_{H_1}
\end{equation}
Finally,
\begin{equation}
   \Big\vert \int \,4\,\sqrt{1-2\varepsilon^2}\,|w|^2\,(w\,\cdot\, x)\,x \Big\vert\leq \,C\,||w||_{H^1}^2\,(\,|a|\,+\,||w_{\perp}||_2\,).
\end{equation}
Combining \eqref{s}, \eqref{ss}, \eqref{sss} and \eqref{ssss}, and using $||w||_{H^1}\leq C\varepsilon^{2+\eta}$ to absorb higher order terms,
\begin{align}
    |a|\,\leq&\, \frac{C}{\varepsilon^2}\,||w_\perp||_2\,||w||_2
\end{align}
which concludes the proof of the Lemma.
\end{proof}
Next, we prove properties of the coefficient $b$:
\begin{lemma}
     In the setting $w=N_1(a)+\Psi_1(b)+w_\perp$, $ w=u-u_0$, $||w||_{H^1}\leq C\varepsilon^{4+\eta}$, and $w_\perp$ has no $l=1$ component,
   \begin{equation}
    |b|\leq {C}{\varepsilon^\eta}||w_\perp||_{H_1}
\end{equation}
\end{lemma}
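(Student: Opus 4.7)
The plan is to test the Euler–Lagrange equation $DE_\varepsilon(u_\varepsilon)[N_1(c)]=0$ against the vectorial spherical harmonic $N_1(c)=\langle c,x\rangle x$, for arbitrary $c\in\mathbb R^3$. The choice of $N_1(c)$ rather than $\Psi_1(c)$ is crucial: testing against $\Psi_1(c)$ only reproduces information about $a$ (essentially the balancing \eqref{equation:balancingcondition}, since the $b$-contributions cancel after using $\int\Psi_1(b)\cdot\Psi_1(c)=\tfrac{8\pi}{3}b\cdot c$), whereas $-\Delta$ couples $N_1$ and $\Psi_1$ through $-\Delta N_1(c)=4N_1(c)-2\Psi_1(c)$, so $N_1(c)$ sees \emph{both} $a$ and $b$ in its linearization and, combined with Lemma \ref{lemma:controlofa}, delivers the desired control on $b$.

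First I would evaluate the linear-in-$w$ contributions. Integration by parts with the formula above and the VSH identities \eqref{equation: propertiesofvsh}, \eqref{equation: vshpropertiesfirstmodes} give $\int\nabla u_\varepsilon\cdot\nabla N_1(c)=\tfrac{16\pi}{3}(a-b)\cdot c$ (the $w_\perp$-term vanishes since $w_\perp$ has no $l=1$ component, and the $u_0$-term vanishes because $\int\langle c,x\rangle=0$). Expanding $(1-|u_\varepsilon|^2)(u_\varepsilon\cdot N_1(c))$ with $u_\varepsilon=u_0+w$, using $(w\cdot x)=\langle a,x\rangle+\langle x,w_\perp\rangle$ and that $\langle x,w_\perp\rangle$ has no first-mode harmonic content, the linear-in-$w$ part of $\tfrac{1}{\varepsilon^2}\int(1-|u_\varepsilon|^2)(u_\varepsilon\cdot N_1(c))$ evaluates to $\tfrac{4\pi(6\varepsilon^2-2)}{3\varepsilon^2}\,a\cdot c$. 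Matching both sides produces
\[
\tfrac{16\pi}{3}\,b\cdot c\;=\;\tfrac{8\pi(1-\varepsilon^2)}{3\varepsilon^2}\,a\cdot c\;+\;R,
\]
where $R$ collects all quadratic and cubic (in $w$) corrections.

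The key structural observation is that every contribution to $R$ involving only $w_1:=N_1(a)+\Psi_1(b)$ vanishes by parity: $\int\langle c,x\rangle\langle a,x\rangle^2=\int\langle c,x\rangle\langle a,x\rangle\langle b,x\rangle^2=0$ (odd in $x$), and $\int\langle c,x\rangle(|b|^2-\langle b,x\rangle^2)=0$ (the first summand is proportional to $\int\langle c,x\rangle=0$, the second is again odd). Hence every surviving piece of $R$ carries at least one factor of $w_\perp$. Combined with the a priori bounds $\|w\|_{H^1}\leq C\varepsilon^{4+\eta}$, $|b|\leq C\varepsilon^{4+\eta}$, and $|a|\leq C\varepsilon^{2+\eta}\|w_\perp\|_2$ from Lemma \ref{lemma:controlofa}, standard Hölder and Sobolev estimates yield $|R|\leq C|c|\varepsilon^\eta\|w_\perp\|_{H^1}$, with any $|b|$-dependent pieces small enough (they carry at least a factor $\varepsilon^{2+\eta}$) to be absorbed into the left-hand side. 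Using Lemma \ref{lemma:controlofa} once more, $\tfrac{8\pi(1-\varepsilon^2)}{3\varepsilon^2}|a|\leq C\varepsilon^\eta\|w_\perp\|_2\leq C\varepsilon^\eta\|w_\perp\|_{H^1}$, and choosing $c=b/|b|$ concludes $|b|\leq C\varepsilon^\eta\|w_\perp\|_{H^1}$.

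The main obstacle is the bookkeeping for $R$: without the parity cancellation of the pure-$w_1$ contributions, the nonlinear residual would scale only as $\varepsilon^{-2}\|w\|_{H^1}^2\sim\varepsilon^{6+2\eta}$, with no factor of $\|w_\perp\|$, and would therefore fail to match the target bound $\varepsilon^\eta\|w_\perp\|_{H^1}$ whenever $\|w_\perp\|_{H^1}$ is significantly smaller than $\|w\|_{H^1}$. Isolating the parity-driven cancellations is thus the non-trivial structural input of the proof.
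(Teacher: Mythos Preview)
Your approach is a close cousin of the paper's but uses a different test function. The paper tests the Euler--Lagrange equation against constant vectors $c\in\mathbb R^3$ (equivalently, it uses $\int_{S^2}(1-|u|^2)u=0$, obtained by integrating the equation); since $c=N_1(c)+\Psi_1(c)$, this is the sum of your test against $N_1(c)$ and the complementary test against $\Psi_1(c)$. At the linear level both routes deliver the same relation $|b|\lesssim\varepsilon^{-2}|a|+\text{(nonlinear)}$, and the conclusion then follows by Lemma~\ref{lemma:controlofa} in either case. Your observation that the test against $\Psi_1(c)$ loses $b$ at the linear level is correct and is a nice sanity check.

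There is, however, a genuine error in your parity argument for the nonlinear remainder. The quadratic pure-$w_1$ contributions do vanish (they are odd in $x$), but the cubic ones do \emph{not}: the cubic piece of $R$ is $-\varepsilon^{-2}\int|w|^{2}(w\cdot x)\langle c,x\rangle$, and restricting to $w=w_1$ gives $-\varepsilon^{-2}\int|w_1|^{2}\langle a,x\rangle\langle c,x\rangle$, which is a combination of degree-$2$ and degree-$4$ integrands in $x$---even under $x\mapsto-x$, not odd. In particular your asserted identity $\int\langle c,x\rangle\langle a,x\rangle\langle b,x\rangle^{2}=0$ is false (take $a=b=c$), and likewise $\int\langle c,x\rangle\langle a,x\rangle^{3}\neq0$ and $\int\langle c,x\rangle\langle a,x\rangle|b|^{2}=\tfrac{4\pi}{3}|b|^{2}\,a\cdot c\neq0$. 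Hence the claim ``every surviving piece of $R$ carries at least one factor of $w_\perp$'' fails as written.

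The fix is immediate and in fact already implicit in your setup: since $w_1\cdot x=\langle a,x\rangle$, every cubic pure-$w_1$ term carries a factor of $|a|$, and Lemma~\ref{lemma:controlofa} gives
\[
\varepsilon^{-2}\,|a|\,\|w\|_{H^1}^{2}\;\le\;C\varepsilon^{-4}\,\|w_\perp\|_2\,\|w\|_{H^1}^{3}\;\le\;C\varepsilon^{8+3\eta}\,\|w_\perp\|_{H^1},
\]
which is far stronger than needed. The paper, by contrast, makes no vanishing claim for the cubic part: it simply bounds $\int|w|^{2}w$ by $C(|a|^{3}+|b|^{3}+\|w_\perp\|_{H^1}^{3})$ and absorbs the resulting $\varepsilon^{-2}|b|^{3}\le C\varepsilon^{6+2\eta}|b|$ into the left-hand side. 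Either remedy closes your argument.
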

\begin{proof}
Since $\int (1-|u|^2)u=0$,
\begin{equation}
    \int 2\varepsilon^2w=\int |w|^2 u +2\,(1-2\varepsilon^2) \,(\,w\cdot x\,)\,x+2\sqrt{1-2\varepsilon^2}\,(\,x\cdot w\,)\,w,
\end{equation}
which can be rearranged to obtain
\begin{align}
    &\frac{4\pi}{3}\big(\,2\varepsilon^2(a+2b)-2\,({1-2\varepsilon^2})\,a\,\big)=\int_{S^2} |w|^2 u+ 2\sqrt{1-2\varepsilon^2}\,(\,x\cdot w\,)\,w\\
    =&\int_{S^2} |w|^2 w+|w|^2 x+ 2\sqrt{1-2\varepsilon^2}\,(\,x\cdot w\,)\,w
\end{align}
and hence, using Lemma \ref{lemma: usefulintegralsofw},
\begin{align}
   |b|\leq |\frac{1-3\varepsilon^2}{2\varepsilon^2}a|+ \frac{C}{\varepsilon^2}(|a|+||w_\perp||_2)||w||_2+\frac{C}{\varepsilon^2}(|a|^3+|b|^3+||w_\perp||_{H^1}^3).
\end{align}
Using the bounds for $a$ obtained in Lemma \ref{lemma:controlofa} and absorbing the higher order therm,
\begin{equation}
    |b|\leq \frac{C}{\varepsilon^4}||w_\perp||_{H_1}||w||_{H_1}+ \frac{C}{\varepsilon^2}||w_\perp||_{H_1}||w||_{H_1}\leq  \frac{C}{\varepsilon^4}||w_\perp||_{H_1}||w||_{H_1}
\end{equation}
and by Proposition \ref{proposition: epsilon8closeness}, since $||w||_{H_1}\leq C\varepsilon^{4+\eta}$,
\begin{equation}
    |b|\leq {C}{\varepsilon^\eta}||w_\perp||_{H_1}
\end{equation}
\end{proof}
\begin{corollary}\label{corollary: wwperpcontrol}  In the setting $w=N_1(a)+\Psi_1(b)+w_\perp$, $ w=u-u_0$, $||w||_{H^1}\leq C\varepsilon^{4+\eta}$, and $w_\perp$ has no $l=1$ component,
    \begin{equation}
        ||w||\leq C||w_\perp||_{H^1}.
    \end{equation}
\end{corollary}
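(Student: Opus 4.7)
The plan is to expand $w$ in the vectorial spherical harmonic basis and use the two preceding lemmas to absorb the $l=1$ contributions into $||w_\perp||_{H^1}$. By the orthogonality relations \eqref{equation: propertiesofvsh}, together with the hypothesis that $w_\perp$ has no $l=1$ component, the three summands $N_1(a)$, $\Psi_1(b)$ and $w_\perp$ are pairwise $L^2$-orthogonal; the only non-trivial Dirichlet cross term between the first-mode pieces is $\int \nabla N_1(a)\cdot\nabla \Psi_1(b) = -\frac{16\pi}{3}\,a\cdot b$. Using the normalisations \eqref{equation: vshpropertiesfirstmodes} and the eigenvalue relations in \eqref{equation: propertiesofvsh}, I would first establish an inequality of the form
\begin{equation}
||w||_{H^1}^{2}\;\le\;C\,|a|^{2}+C\,|b|^{2}+C\,|a|\,|b|+||w_\perp||_{H^1}^{2}.
\end{equation}

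Next I would plug in the bounds on $|a|$ and $|b|$ coming from the two preceding lemmas, in combination with the a priori estimate $||w||_{H^1}\le C\varepsilon^{4+\eta}$ of Proposition~\ref{proposition: epsilon8closeness}. Lemma~\ref{lemma:controlofa} gives $|a|\le \frac{C}{\varepsilon^{2}}||w_\perp||_{2}\,||w||_{2}\le C\,\varepsilon^{2+\eta}\,||w_\perp||_{2}$, while the subsequent lemma on $b$ yields $|b|\le C\,\varepsilon^{\eta}\,||w_\perp||_{H^1}$. Inserting these into the inequality above, every first-mode contribution carries at least a positive power of $\varepsilon$ in front of $||w_\perp||_{H^1}^{2}$ or $||w_\perp||_{2}^{2}$, so that for all $\varepsilon$ sufficiently small the error terms can be absorbed into the left-hand side, giving $||w||_{H^1}^{2}\le 2\,||w_\perp||_{H^1}^{2}$.

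The only delicate point is that the $\Psi_{1}$-direction is (essentially) a kernel direction of $D^{2}E_\varepsilon(u_{0})$: as noted after \eqref{equation: secondvariationgeneralcoefficients}, no positivity of the second variation controls $b$, so one cannot hope to reproduce this estimate from a direct coercivity argument. The crucial input that makes the plan work is Proposition~\ref{proposition: epsilon8closeness}, whose smallness $\varepsilon^{4+\eta}$ converts the bilinear estimate of Lemma~\ref{lemma:controlofa} into a linear bound on $|a|$ with prefactor $\varepsilon^{2+\eta}$ and, in the same way through the $b$-lemma, produces the prefactor $\varepsilon^{\eta}$ in front of $||w_\perp||_{H^1}$ in the estimate of $|b|$. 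Without these additional powers of $\varepsilon$, the first-mode coefficients could not be absorbed and the corollary would fail; modulo this observation the remainder of the proof is a direct assembly of the previous bounds.
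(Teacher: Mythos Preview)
Your proposal is correct and is precisely the argument the paper has in mind: the corollary is stated without proof because it follows immediately from the two preceding lemmas via the orthogonal decomposition you describe, and your substitution of $|a|\le C\varepsilon^{2+\eta}\|w_\perp\|_2$ and $|b|\le C\varepsilon^{\eta}\|w_\perp\|_{H^1}$ into $\|w\|_{H^1}^2\le C|a|^2+C|b|^2+C|a||b|+\|w_\perp\|_{H^1}^2$ is exactly the intended step. One cosmetic slip: the small first-mode terms are absorbed into the \emph{right}-hand side (they combine with $\|w_\perp\|_{H^1}^2$), not the left.
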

With these estimates, \eqref{equation: secondvariationgeneralcoefficients} becomes
\begin{align}\label{equation:positivesecondvariation}
   D^2E_\varepsilon(u_R)[w,w]\geq &\, \frac{1}{4}\sum\limits_{l\geq 2}(l(l+1))(a_{lm}^2+b_{lm}^2+c_{lm}^2)+\bigg(\,2+\frac{2(1-2\varepsilon^2)}{\varepsilon^2}\,\bigg)\frac{4\pi}{3}|a|^2-\frac{32\pi}{3}a\cdot b+|a_0|^2\\\geq& \, \frac{1}{8}||w_\perp||_{H^1}^2-C\varepsilon^{2\eta} ||\,w_\perp\,||_{H^1}^2\\\geq\,& \lambda_0\,||\,w\,||_{H^1}^2
\end{align}
where we used $\frac{1}{8}\big(\,l(l+1)+2\sqrt{l(l+1)}\,\big)+\frac{1}{8}\leq \frac{1}{4}l(l+1)$ for all $l\geq 2$ in the second line and Corollary \ref{corollary: wwperpcontrol} in the last line.
\subsection{$\varepsilon^{4+\eta}$-close critical points must be rotations}
By the positivity of second variation obtained in the previous subsection, we can now prove the following:
\begin{theorem}
    Let $\{u_\varepsilon\}_\varepsilon$ be a $\gamma$-admissible sequence. There exists $\varepsilon_0>0$ such that for all $\varepsilon<\varepsilon_0$, $u_\varepsilon$ is a rotation possibly composed with a complex conjugation:
    \begin{equation}
        u_\varepsilon(x)= R_\varepsilon \circ \kappa^i(x), \quad R_\varepsilon\in SO(3),\,i\in\{1,2\}.
    \end{equation}
\end{theorem}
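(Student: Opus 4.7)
The plan is to combine the quantitative closeness of Proposition \ref{proposition: epsilon8closeness} with the coercivity \eqref{equation:positivesecondvariation} via a standard bootstrap trick based on testing the difference of two Euler--Lagrange equations. After applying a rotation and a possible complex conjugation (both of which preserve the class of $\gamma$-admissible sequences and the critical point set of $E_\varepsilon$), Proposition \ref{proposition: epsilon8closeness} gives
\[
  w:=u_\varepsilon-u_0,\qquad u_0:=\sqrt{1-2\varepsilon^2}\,\mathrm{id},\qquad \|w\|_{H^1(S^2)}\le C\varepsilon^{4+\eta}.
\]
Since both $u_\varepsilon$ and $u_0$ satisfy \eqref{eq:Gintroductionele} (for $u_0$ this is immediate because $-\Delta x=2x$), subtracting the equations and using $|u_\varepsilon|^2=|u_0|^2+2u_0\cdot w+|w|^2$ gives the pointwise identity
\[
  -\Delta w=\frac{1}{\varepsilon^2}\bigl[2\varepsilon^2 w-2(u_0\!\cdot\! w)u_0-2(u_0\!\cdot\! w)w-|w|^2u_0-|w|^2 w\bigr].
\]

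The key step is to test this equation against $w$. After rearrangement, the quadratic part reproduces exactly the second variation:
\[
  D^2E_\varepsilon(u_0)[w,w]=\int|\nabla w|^2-2\int|w|^2+\frac{2}{\varepsilon^2}\int(u_0\!\cdot\! w)^2 =-\frac{3}{\varepsilon^2}\int(u_0\!\cdot\! w)|w|^2-\frac{1}{\varepsilon^2}\int|w|^4.
\]
Because $|u_0|\le 1$ and $H^1(S^2)\hookrightarrow L^p(S^2)$ for every $p<\infty$, the right-hand side is bounded above in absolute value by $C\varepsilon^{-2}\|w\|_{H^1}^{3}+C\varepsilon^{-2}\|w\|_{H^1}^{4}$, and the hypothesis $\|w\|_{H^1}\le C\varepsilon^{4+\eta}$ upgrades this to $(C\varepsilon^{2+\eta}+C\varepsilon^{6+2\eta})\|w\|_{H^1}^2$.

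Next I would apply the positivity \eqref{equation:positivesecondvariation} to the left-hand side. This is legitimate because $w=u_\varepsilon-u_0$ is precisely the object around which that estimate was developed in Section~\ref{section: thefirstmodeoftheddifference}: the decomposition $w=N_1(a)+\Psi_1(b)+w_\perp$, the bounds $|a|\le C\varepsilon^{-2}\|w_\perp\|_2\|w\|_2$, $|b|\le C\varepsilon^\eta\|w_\perp\|_{H^1}$ and Corollary \ref{corollary: wwperpcontrol} all follow from $\|w\|_{H^1}\le C\varepsilon^{4+\eta}$, which we have at hand. Thus $D^2E_\varepsilon(u_0)[w,w]\ge \lambda_0\|w\|_{H^1}^2$. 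Combining the two estimates,
\[
  \lambda_0\,\|w\|_{H^1}^2\le \bigl(C\varepsilon^{2+\eta}+C\varepsilon^{6+2\eta}\bigr)\|w\|_{H^1}^2,
\]
which forces $w\equiv 0$ for all $\varepsilon<\varepsilon_0(\gamma)$, hence $u_\varepsilon=\sqrt{1-2\varepsilon^2}\,\mathrm{id}$ after the initial reduction, i.e.\ $u_\varepsilon=\sqrt{1-2\varepsilon^2}\,R_\varepsilon\circ\kappa^{i}$ in the original frame.

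The main obstacle is essentially already resolved by Proposition \ref{proposition: epsilon8closeness}: the critical power $\varepsilon^{4+\eta}$ on $\|w\|_{H^1}$ is exactly what one needs to beat the $\varepsilon^{-2}$ weights appearing in the cubic and quartic remainders, and the coercivity \eqref{equation:positivesecondvariation} supplies a constant $\lambda_0>0$ that is independent of $\varepsilon$. A minor point of care is to verify that the rotation/conjugation used to produce the $\varepsilon^{4+\eta}$ closeness maps critical points to critical points so that $w=u_\varepsilon-u_0$ really is the difference of two solutions; this is immediate from the invariance of $E_\varepsilon$ under pre- and post-composition by elements of $O(3)$.
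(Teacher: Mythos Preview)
Your overall strategy is the same as the paper's: testing the difference of the two Euler--Lagrange equations against $w$ is algebraically identical to the paper's expansion of $DE_\varepsilon(u_0+w)[w]=0$, and the cubic/quartic remainder is absorbed exactly as you describe using $\|w\|_{H^1}\le C\varepsilon^{4+\eta}$.

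There is, however, a genuine gap in your invocation of \eqref{equation:positivesecondvariation}. That coercivity estimate, and the Lemmas of Section~\ref{section: thefirstmodeoftheddifference} feeding into it, are all stated for $w=N_1(a)+\Psi_1(b)+w_\perp$ with $w_\perp$ having \emph{no $l=1$ component}; in particular $w$ is assumed to have no $\Phi_{1m}$ part. This is not a technicality: the $\Phi_{1m}$ are the infinitesimal rotations and lie in the kernel of $D^2E_\varepsilon(u_0)$ (one checks $u_0\cdot\Phi_{1m}=0$ and $-\Delta\Phi_{1m}=2\Phi_{1m}$, so $D^2E_\varepsilon(u_0)[\Phi_{1m},\Phi_{1m}]=0$). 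Hence no inequality of the form $D^2E_\varepsilon(u_0)[w,w]\ge\lambda_0\|w\|_{H^1}^2$ can hold for general $w$. The rotation supplied by Proposition~\ref{proposition: epsilon8closeness} is not chosen to kill this mode, so after your reduction $w$ may well carry a nontrivial $\Phi_{1m}$ component.

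The paper inserts exactly one additional line to fix this: after applying Proposition~\ref{proposition: epsilon8closeness}, replace the rotation by the $L^2$-closest one, i.e.\ minimise $R\mapsto\int_{S^2}|u_\varepsilon-\sqrt{1-2\varepsilon^2}\,R x|^2$ over $SO(3)$. The first-order condition at the minimiser forces $\int_{S^2} w\cdot(a\times x)=0$ for every $a$, i.e.\ $w\perp\Phi_{1m}$; and since the minimiser is within $O(\varepsilon^{4+\eta})$ of the rotation coming from Proposition~\ref{proposition: epsilon8closeness}, the bound $\|w\|_{H^1}\le C\varepsilon^{4+\eta}$ persists. With that adjustment your argument goes through verbatim.
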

\begin{proof}
    Firstly, replace $u_\varepsilon$ by $u_\varepsilon\circ \kappa$ if necessary, so that the closeness to the identity from Proposition \ref{proposition: epsilon8closeness} holds:
    \begin{equation}
        ||u_\varepsilon-\sqrt{1-2\varepsilon^2}\,\mathrm{id}||_{H^1}\leq C\varepsilon^{4+\eta}.
    \end{equation}
    We now write $u_\varepsilon=u_{R_\varepsilon}+w_\varepsilon$. In order to use the estimate \eqref{equation:positivesecondvariation}, we need to show that $w_\varepsilon$ can be chosen to have no $\Phi_{1m}$ component. This can be done as follows:  
    Minimize the distance 
    \begin{equation}
       F(R_0)=\min\limits_{R\in SO(3)} F(R)=\min\limits_{R\in SO(3)}\bigg\{\int_{S^2}|u_\varepsilon(x)-\sqrt{1-2\varepsilon^2}\,R(x)|^2\bigg\}.
    \end{equation}
    Then $w_\varepsilon=u_\varepsilon\circ R_0-\sqrt{1-2\varepsilon}\,\mathrm{id}$ (we rotate the map without re-labeling) has no $\Phi_{1m}$ components: indeed,
    \begin{align}
        0=&\frac{d}{dt}\bigg\vert_{t=0}\int |u_\varepsilon\circ R_0-\sqrt{1-2\varepsilon^2}\,R_tR_0(x)|^2=\frac{d}{dt}\bigg\vert_{t=0}\int -2\sqrt{1-2\varepsilon^2\,}(u_\varepsilon\circ R_0(x))R_tR_0(x)\\& = -2\sqrt{1-2\varepsilon^2}\int (u_\varepsilon\circ R_0)(x)\frac{d}{dt}\bigg\vert_{t=0}R_tR_0(x)=-2\sqrt{1-2\varepsilon^2}\int (u_\varepsilon\circ R_0)(x)\cdot a({R_t})\times R_0(x)
    \end{align}
    and since $a(R_t)$ is arbitrary, the claim follows. Up to a rotation we have therefore obtained that $w_\varepsilon=u_\varepsilon-\sqrt{1-2\varepsilon^2}\,\mathrm{id}$ decomposes in vectorial spherical harmonics $w_\varepsilon=\Psi_1(b_\varepsilon)+N_1(a_\varepsilon)+w_{\perp,\varepsilon}$ and $w_\perp$ has no $l=1$ component. We can now expand the expression $X\mapsto DE_\varepsilon(u_0+X)[w_\varepsilon]\in\mathbb{R}$ into
    \begin{align}
        DE_\varepsilon(u_\varepsilon)[w_\varepsilon]&=DE_\varepsilon(\sqrt{1-2\varepsilon^2}x+w_\varepsilon)[w_\varepsilon]\\&=DE_\varepsilon(\sqrt{1-2\varepsilon^2}x)[w_\varepsilon]+D^2E_\varepsilon(\sqrt{1-2\varepsilon^2}x)[w_\varepsilon,w_\varepsilon]\\&\quad\quad+D^3E_\varepsilon(\sqrt{1-2\varepsilon^2}x)[w_\varepsilon,w_\varepsilon,w_\varepsilon]+D^4E_\varepsilon(\sqrt{1-2\varepsilon^2}x)[w_\varepsilon,w_\varepsilon,w_\varepsilon,w_\varepsilon],
    \end{align}
    which is
    \begin{align}
        &DE_\varepsilon(\sqrt{1-2\varepsilon^2} x+w_\varepsilon )[w_\varepsilon ]\\&
=\int_{S^2}\,|\nabla w_\varepsilon |^{2}-2\,|w_\varepsilon |^{2}
+\frac{1-2\varepsilon^{2}}{\varepsilon^{2}}(x\!\cdot\! w_\varepsilon )^{2}
+\frac{3\sqrt{1-2\varepsilon^{2}}}{\varepsilon^{2}}(x\!\cdot\! w_\varepsilon )\,|w_\varepsilon |^{2}
+\frac{1}{\varepsilon^{2}}|w_\varepsilon |^{4}
\end{align}
and by \eqref{equation:positivesecondvariation}, for all $\varepsilon$ small enough,
\begin{align}
&\geq \lambda_0 ||\,w_\varepsilon \,||_{H^1}^2-\frac{3\sqrt{1-2\varepsilon^{2}}}{\varepsilon^{2}}||\,w_\varepsilon \,||_{H^1}^3
-\frac{1}{\varepsilon^{2}}||\,w_\varepsilon \,||_{H^1}^4\\
&\geq \frac{\lambda_0}{2}||\,w_\varepsilon \,||_{H^1}^2
    \end{align}
    which contradicts the fact that $DE_\varepsilon(u_\varepsilon)\equiv 0$. Hence $w_\varepsilon=0$, and this concludes the proof of the Theorem.   
\end{proof}

\end{document}